\documentclass{article}
\usepackage{times}
\usepackage{amssymb, amsmath, amsthm}
\usepackage{float, multicol}
\usepackage{caption}
\usepackage{hyperref}
\usepackage[T1]{fontenc}

\restylefloat{table}

\usepackage[margin=1.0in]{geometry}
\usepackage{booktabs}

\newtheorem{theorem}{Theorem}[section]
\newtheorem{lemma}[theorem]{Lemma}
\newtheorem{proposition}[theorem]{Proposition}

\theoremstyle{definition}

\newtheorem{algorithm}[theorem]{Algorithm}

\theoremstyle{remark}
\newtheorem{remark}[theorem]{Remark}

\newcommand{\F}{\mathbb{F}}
\newcommand{\Z}{\mathbb{Z}}
\newcommand{\C}{\mathbb{C}}
\newcommand{\Cr}{\mathrm{C}}

\newcommand{\CGW}{\mathrm{CGW}}
\newcommand{\BH}{\mathrm{BH}}
\newcommand{\We}{\mathrm{W}}

\newcommand{\GF}{\mathrm{GF}}

\newcommand{\SAut}{\mathrm{SAut}}
\newcommand{\Mon}{\mathrm{Mon}}
\newcommand{\diag}{\mathrm{diag}}
\newcommand{\CT}{\mathrm{CT}}

\newcommand\myH{{\bf H}}
\newcommand\myT{{\bf T}}
\newcommand\myU{\chi_{{\bf H}}}

\newcommand{\myuparrow}{\!\uparrow}

\title{Complex generalised weighing matrices in centraliser algebras of monomial representations} 

\author{
Ronan Egan\footnotemark[2],~
Padraig \'{O} Cath\'{a}in\footnotemark[3]~~
and Andrea {\v S}vob\footnotemark[1]}
\date{
\today
}

\begin{document}

\maketitle

\begin{abstract}
An $n \times n$ matrix $W$ with exactly $w$ non-zero entries taken from the set of $k^{\rm th}$ complex roots of unity in each row and column satisfying $WW^{\ast} = wI_n$ is a complex generalised weighing matrix $\CGW(n,w;k)$.
We study such matrices through the centraliser algebras of monomial representations of finite groups. Using an exhaustive search over the linear characters of Schur covers, we classify, up to monomial equivalence, the complex generalised weighing matrices admitting a primitive group of rank at most five and degree at most $80$ acting by strong automorphisms, for coefficient orders $k \leq 6$, with partial results for larger degrees $100$. The census recovers known infinite families related to projective and affine finite geometries, describes infinite families related to Hamming schemes and settles the existence of some small open cases enumerated in the literature. We construct quantum error-correcting codes from the these matrices and determine their minimum distances exactly in all cases.
\end{abstract}

\renewcommand{\thefootnote}{\fnsymbol{footnote}}
\footnotetext[2]{School of Mathematical Sciences, Dublin City University, Ireland. Email: \href{mailto:ronan.egan@dcu.ie}{ronan.egan@dcu.ie}}
\footnotetext[3]{\'{U}dar\'{a}s na Gaeltachta, Ireland. Email: \href{mailto:p.ocathain@gmail.com}{p.ocathain@gmail.com}}
\footnotetext[1]{Faculty of Mathematics, University of Rijeka, Croatia. Email: 
\href{mailto:asvob@math.uniri.hr}{asvob@math.uniri.hr}}
\renewcommand{\thefootnote}{\roman{footnote}}

\noindent\textbf{Keywords:} complex generalised weighing matrix;
centraliser algebra; monomial representation; Schur cover; quantum
error-correcting code.

\noindent\textbf{MSC 2020:} 05B20; 20C25; 94B05.

\section{Introduction}

A recurring theme in the study of Hadamard-type matrices is the choice of
a measure of complexity.  Historically, the natural measures were the
order of the matrix and the size of its set of entries; matrices with
entries among the $k$-th roots of unity have accordingly been studied
extensively see~\cite{Karol} and the references contained therein for extensive and up to date literature on the subject. In~\cite{Monrep},
the second and third authors, together with Barrera Acevedo and Dietrich,
proposed an alternative measure: the dimension of an associative algebra
containing the matrix. In particular, a matrix invariant under a group of automorphisms of small rank necessarily lies in the centraliser algebra of that linear group, 
which can be studied using methods of representation theory. If the algebra is commutative, 
matrix eigenvalues are expressed as character sums over double cosets, so that conditions on entries and eigenvalues -- which characterise Hadamard and weighing matrices alike --
reduce to a finite system of norm equations.  The present paper extends that programme to weighing matrices: we classify the complex generalised weighing
matrices lying in centraliser algebras of monomial groups of dimension at most 6 by exhaustive computer search, and as a consequence we exhibit the some new families and isolated examples of highly symmetric complex generalized weighing matrices. 

To be precise, a \textit{complex generalised weighing matrix} of order~$n$, weight~$w$,
and phase~$k$ is a square matrix with entries in the $k$-th roots of unity
and zero satisfying $WW^{\ast} = wI_n$; the set of all such matrices is
denoted $\CGW(n,w;k)$.  When $w = n$ one recovers the Butson Hadamard
matrices $\BH(n,k)$, and when $k = 2$ the real weighing matrices
$\We(n,w)$.  The natural equivalence operations permute and rescale rows
and columns by roots of unity, so the automorphism group of $W$ is isomorphic to a subgroup of the monomial group $\Mon_n(\langle\zeta_k\rangle)$, and a
matrix $W$ invariant under a large monomial group, i.e., $W = PWP^{\ast}$ for all $P \in G \leq \Mon_n(\langle\zeta_k\rangle)$, lies in a centraliser
algebra of small dimension.  Studying $W$ through this centraliser algebra
is the organising principle of the paper, following the research project mentioned previously and an independent investigation of Goldberger and collaborators, \cite{Monrep,
Goldberger2024}.

For each primitive permutation group $G$
of rank $r \in \{3,4,5\}$ and degree $n \leq 80$, we enumerate the $\CGW$
matrices in the centraliser algebras of the monomial representations of a
Schur cover of $G$, with coefficients drawn from
$\{0\} \cup \langle \zeta_k \rangle$ for $k \in \{2,3,4,5,6\}$.
Weighing matrices are relatively abundant: we find over 3,000 weighing matrices 
up to degree 80, in 136 equivalence classes. Additional searches at higher degrees 
and over almost simple sporadic groups and groups of Lie type gave additional 
results, including the smallest members of infinite families described previously by 
Seberry and Whiteman, by Berman and by Goldberger, \cite{SeberryWhiteman1975, Berman1978, Goldberger2024}. For 2-transitive groups, our results corroborate those of Moorhouse, \cite{Moorhouse2001}.

\subsection{Structure of paper}

Section~\ref{sec:prelim} recalls the centraliser-algebra machinery
of~\cite{Monrep} and the canonical-form methods used to verify and
organise the census. Two ingredients are new: an adaptation of the
structure theory of strong automorphism groups to matrices with zero
entries, and Proposition~\ref{prop:lifting}, which uses Schur's theory of
projective representations to show that the search over linear characters
of a single Schur cover is exhaustive, without assuming that the groups involved are perfect.

Section~\ref{sec:families} presents the classification; its precise scope
is stated as Theorem~\ref{thm:scope}. Apart from the search framework that we develop, our main theoretical contribution to the classification of generalised weighing matrices are theorems classifying weighing matrices in certain monomial covers of Hamming schemes, given in Theorems \ref{thm:hamming-flag} and \ref{thm:two-relation}. 

Among the isolated matrices found are a $\CGW(15,7;3)$ and a $\CGW(10,7;6)$, resolving two open cases in Egan's existence tables, \cite{CGW-Survey}. We also found a symmetric $\We(21,9)$, listed as an open case in the Handbook of Combinatorial
Designs, and discovered independently in 2025, \cite{Handbook2007, Rosin2025}. Finally, we constructed a skew-symmetric signing $S$ of the Hall--Janko graph with $SS^{\top} = 36I_{100}$ from the double cover of the Hall-Janko group. 

Section~\ref{sec:quantum} applies the newly located matrices to the construction of Hermitian self-orthogonal codes over fields of square order, and hence quantum error-correcting codes.  We determine the exact minimum distance of every code constructed (Table~\ref{tab:qcodes}); we identify an infinite family of $[[4^d,\, 4^d - 2^{d+1},\, 3]]$ codes supported on the Hamming-scheme matrices (Proposition~\ref{prop:family-distance}); and we obtain a $[[25,17,3]]_2$ code matching the best known parameters at its length and dimension.  Section~\ref{sec:discussion} concludes with a comparison against the published catalogues and two open problems.

\section{Preliminaries}\label{sec:prelim}

Throughout this paper, let $\zeta_{k} = e^{\frac{2\pi\sqrt{-1}}{k}}$, and denote by $\Mon_{n}(\langle \zeta_{k} \rangle)$ the group of monomial $n \times n$ matrices with entries in the $k^{\textrm{th}}$ roots of unity. There is a canonical projection $\Mon_{n}(\langle \zeta_{k} \rangle) \rightarrow \Mon_n(\langle 1\rangle)$ onto permutation matrices, with diagonal matrices in the kernel. We use the terminology of permutation groups to describe monomial groups, discussing for example orbits, stabilisers, and transitive actions.

\subsection{Complex generalised weighing matrices}

A \textit{complex generalised weighing matrix} of \textit{order}~$n$, \textit{weight}~$w$, and \textit{phase}~$k$ is an $n \times n$ matrix $W$ with entries in $\{0\} \cup \langle \zeta_k \rangle$ satisfying $WW^{\ast} = wI_n$, where $W^{\ast}$ denotes the conjugate transpose. The set of all such matrices is denoted $\CGW(n, w; k)$. When $w = n$ one recovers the \textit{Butson Hadamard matrices} $\BH(n, k) = \CGW(n, n; k)$, and when $k = 2$ the real \textit{weighing matrices} $\We(n, w) = \CGW(n, w; 2)$. An \textit{automorphism} of $W \in \CGW(n,w;k)$ is a pair of monomial matrices $(P, Q) \in \Mon_n(\langle \zeta_k \rangle)^2$ satisfying $PWQ^{\ast} = W$. A \textit{strong automorphism} is an automorphism of the form $(P,P)$; the group of all strong automorphisms is denoted $\SAut(W) \leq \Mon_n(\langle \zeta_k \rangle)^2$.

\subsection{Equivalence for generalized weighing matrices}\label{sec:equiv}

Two matrices $W_1, W_2 \in \CGW(n,w;k)$ are \textit{classically equivalent} if $W_2 = PW_1Q^{\ast}$ for some $P, Q \in \Mon_n(\langle\zeta_k\rangle)$. \textit{Extended equivalence} additionally allows field automorphisms $\zeta_k^e \mapsto \zeta_k^{re}$ ($\gcd(r,k)=1$) and transposition $W \mapsto W^\top$;
this matches the convention used by de Launey and Flannery, \cite{DeLauneyFlannery}. 

We reduce classical equivalence testing to directed graph isomorphism as follows. Define a directed graph $\mathcal{B}(W)$ with $2nk$ vertices partitioned into $n$ \textit{row blocks} $R_0,\ldots,R_{n-1}$ and $n$ \textit{column blocks} $C_0,\ldots,C_{n-1}$, each of size~$k$.  For each nonzero entry $W[i][j]=\zeta_k^e$, add the arc $(R_i[b],\, C_j[(b+e)\bmod k])$ for every $b\in\Z/k\Z$.  In addition,
add the directed $k$-cycle $R_i[0]\to R_i[1]\to\cdots\to R_i[k-1]\to R_i[0]$ within
each row block, and similarly within each column block. These directed $k$-cycles ensure that a potential automorphism maps row blocks to row blocks and column blocks to column blocks, and that the action on a particular row block corresponds to multiplication by a root of unity.
Consequently $W\mapsto\mathcal{B}(W)$ induces a bijection between classical
equivalence classes of $\CGW(n,w;k)$ matrices and isomorphism classes of
such digraphs. 

Canonical forms are computed via \textit{nauty}~\cite{McKay2014} in \textsc{SageMath}. The \textit{extended canonical key} of $W$ is the lexicographically smallest classical canonical label over the orbit
$\{\varphi_r(W),\,\varphi_r(W^\top): r\in(\Z/k\Z)^\times\}$ where  $\varphi_{r}$ is a field automorphism applied entrywise. Two matrices are extended-equivalent if and only if their extended canonical keys coincide.

\subsection{Monomial representations and their centraliser algebras}

This section summarises the machinery of \cite[Sections~3 and~5]{Monrep}
in the form we use; all statements needed in the sequel are reproduced in
full.  Let $G$ be a finite group with subgroup $H \leq G$ of index $n$,
let $T = \{t_1, \ldots, t_n\}$ with $t_1 = 1$ be a right transversal of
$H$ in $G$, so that every $g \in G$ factors uniquely as $g = h_g t_g$ with
$h_g \in H$ and $t_g \in T$, and define $\myH(g) = h_g$ and
$\myT(g) = t_g$.  Let $\chi \colon H \rightarrow \mathbb{C}^{\times}$ be a
linear character, extended to $\chi^{+} \colon G \rightarrow \mathbb{C}$
by zero outside $H$, and write $\myU(g) = \chi(\myH(g))$.  The group $G$
acts on $T$ by $t_i \cdot g = \myT(t_i g)$; orbits of the diagonal action
on $T \times T$ are called \textit{orbitals}, and their number is the
\textit{rank} $r$ of the permutation group.  The \textit{centraliser
algebra} $\Cr(\rho)$ of a representation $\rho$ is the algebra of all
matrices commuting with $\rho(G)$.  Matrices below have rows and columns
labelled by $T$, with entries $m(t_i, t_j)$.

\begin{proposition}[\cite{Monrep}, Prop.~3.1]\label{prop:monrep}
Every transitive $n$-dimensional monomial representation of $G$ is induced from a linear character $\chi$ of a point stabiliser $H \leq G$. The induced representation $\rho_\chi = \chi \myuparrow_H^G$ maps $g \in G$ to the matrix
\[
\rho_\chi(g) = \bigl[\chi^{+}(t_i g t_k^{-1})\bigr]_{i,k}.
\]
\end{proposition}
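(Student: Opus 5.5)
The proof has two parts: the structural claim that a transitive monomial representation is induced from a linear character, and the explicit matrix formula. Let $\rho\colon G\to \Mon_n(\C^\times)$ be transitive, meaning the permutation action obtained by composing $\rho$ with the projection onto permutation matrices is transitive on the coordinate lines $\langle e_1\rangle,\dots,\langle e_n\rangle$. Let $H$ be the stabiliser of the line $\langle e_1\rangle$; it has index $n$ by the orbit--stabiliser theorem.

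For the first part, each $h\in H$ acts on $\langle e_1\rangle$ by a scalar. I will use the convention of right action by row vectors, matching the right transversal and the $t_i\cdot g$ action. So for $h\in H$ write $e_1\rho(h)=\chi(h)e_1$. Since $\rho$ is a homomorphism, $\chi$ is multiplicative, so $\chi$ is a linear character of $H$.

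Next, choose the right transversal $T$ with $t_1=1$. Transitivity lets us renumber the lines so that $\langle e_1\rangle\rho(t_i)=\langle e_i\rangle$. After conjugating $\rho$ by a diagonal matrix, we may set $e_i:=e_1\rho(t_i)$; this rescaling does not change the equivalence class of $\rho$. For any $g\in G$ and index $i$, factor $t_ig=h\,t_k$ with $h=\myH(t_ig)\in H$ and $t_k=\myT(t_ig)$. Then
\[
e_i\rho(g)=e_1\rho(t_ig)=e_1\rho(h)\rho(t_k)=\chi(h)\,e_k .
\]
So the $(i,k)$ entry of $\rho(g)$ is $\chi(t_igt_k^{-1})$ when $t_igt_k^{-1}\in H$, and $0$ otherwise. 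This is exactly $\chi^{+}(t_igt_k^{-1})$, because for fixed $i$ there is exactly one $k$ with $t_igt_k^{-1}\in H$.

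This gives both the displayed formula and the identification $\rho\cong\chi\myuparrow_H^G$. For the latter, the right-hand side is the standard matrix of the induced representation on the basis $\{v\otimes t_i\}$ of $\C\otimes_{\C H}\C G$. Equivalently, one can conclude via the Frobenius-reciprocity characterisation of induced modules: a $G$-module that decomposes as a direct sum of lines permuted transitively by $G$ is induced from the stabiliser of one line.

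The only delicate point is keeping the conventions consistent: the left versus right action, and whether the formula should read $t_igt_k^{-1}$ or $t_i^{-1}gt_k$. I will fix the row-vector, right-action convention at the start so that it matches $t_i\cdot g=\myT(t_ig)$. I will also check directly that $g\mapsto[\chi^{+}(t_igt_k^{-1})]$ is a homomorphism. This follows from the identity $\sum_k\chi^{+}(t_igt_k^{-1})\chi^{+}(t_kg't_l^{-1})=\chi^{+}(t_igg't_l^{-1})$: only one $k$ contributes to the sum, and $\chi$ is multiplicative on $H$.
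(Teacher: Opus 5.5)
Your proof is correct. The paper gives no proof of its own here and simply quotes the result from \cite{Monrep}; your route is the standard one: the stabiliser of $\langle e_1\rangle$ acts on that line by a linear character $\chi$, you rescale along the right transversal by $e_i := e_1\rho(t_i)$, and the factorisation $t_ig=\myH(t_ig)\,\myT(t_ig)$ gives the entries $\chi^{+}(t_igt_k^{-1})$, with your row-vector convention matching $t_i\cdot g=\myT(t_ig)$. Your change of basis is a permutation followed by a diagonal rescaling, so you in fact obtain monomial equivalence rather than mere equivalence of representations, which is exactly the form in which the result is used in the proof of Proposition~\ref{prop:lifting}.
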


A matrix $M$ lies in $\Cr(\rho_\chi)$ if and only if
$m(\myT(g), \myT(tg)) = m(1, t)\,\myU(g)^{-1}\myU(tg)$ for all $g \in G$
and $t \in T$ \cite[Prop.~3.2]{Monrep}: the entries of $M$ along an
orbital are determined, via explicit character values, by the entry at
its representative $(1,t)$.  The scalar $\myU(g)^{-1}\myU(tg)$ may,
however, depend on $g$ and not only on the pair
$(\myT(g), \myT(tg))$; when it does, $m(1,t)$ is forced to vanish.  The
orbital of $(1,t)$ is called \emph{orientable} if no such conflict
occurs, which happens if and only if $\chi(tht^{-1}h^{-1}) = 1$ for all
$h \in H \cap t^{-1}Ht$ \cite[Def.~3.3 and Prop.~3.6]{Monrep}; these are the
orbitals whose matrices survive the lifting to the Schur cover.  For an
orientable orbital, the element of $\Cr(\rho_\chi)$ with $m(1,t) = 1$,
supported on that orbital, is a \emph{basis matrix}.

\begin{theorem}[\cite{Monrep}, Thm.~3.4]\label{thm:basis}
The centraliser algebra $\Cr(\rho_\chi)$ has a $\mathbb{C}$-basis
consisting of these basis matrices, one for each orientable orbital.
\end{theorem}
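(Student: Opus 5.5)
The plan is to identify $\Cr(\rho_\chi)$ with a space of functions on $G$ and to read off a basis orbital by orbital. A matrix $M$ with rows and columns indexed by $T$ commutes with $\rho_\chi(G)$ exactly when $\rho_\chi(g)M\rho_\chi(g)^{-1}=M$ for all $g\in G$. By Proposition~\ref{prop:monrep}, $\rho_\chi(g)$ is the monomial matrix that moves $t_i$ to $\myT(t_i g)$, with the scalar coming from $\chi$ of the $H$-part. So conjugation by $\rho_\chi(g)$ sends the entry $m(t_i,t_j)$ to $m(\myT(t_ig),\myT(t_jg))$, multiplied by a ratio of $\chi$-values. Setting $t_i=1$ and writing $t=t_j$ gives exactly the criterion quoted from \cite[Prop.~3.2]{Monrep}:
\[
m(\myT(g),\myT(tg))=m(1,t)\,\myU(g)^{-1}\myU(tg).
\]
Since $G$ acts transitively on $T$, every pair in $T\times T$ lies in an orbital with a representative of the form $(1,t)$. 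The first step is therefore to note that $M$ is determined by the values $m(1,t)$, one for each orbital representative $t$. It follows that $\Cr(\rho_\chi)$ decomposes as a direct sum, over orbitals $\mathcal{O}$, of the subspaces of matrices supported on $\mathcal{O}$. The constraints never link different orbitals, so each such subspace has dimension at most one.

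The second step is to show that this dimension is one precisely when the orbital is orientable. If $\chi(tht^{-1}h^{-1})=1$ for all $h\in H\cap t^{-1}Ht$, define $m$ on the orbital by the displayed formula with $m(1,t)=1$. The main obstacle is checking that this is well defined. Suppose $g,g'$ satisfy $\myT(g)=\myT(g')$ and $\myT(tg)=\myT(tg')$. Then $g'=h_1g$ and $tg'=h_2tg$ with $h_1,h_2\in H$, so $h_2=th_1t^{-1}$ and $h_1\in H\cap t^{-1}Ht$. Since $\myU(hx)=\chi(h)\myU(x)$ for $h\in H$, the two candidate scalars differ by the factor
\[
\chi(h_1)^{-1}\chi(th_1t^{-1})=\chi(th_1t^{-1}h_1^{-1}),
\]
which equals $1$ by orientability. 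Conversely, if the orbital is not orientable, choose $h$ with $\chi(tht^{-1}h^{-1})\neq 1$. Taking $g=1$ and $g'=h$ then forces $m(1,t)=\chi(tht^{-1}h^{-1})\,m(1,t)$, so $m(1,t)=0$. This is the content of \cite[Prop.~3.6]{Monrep}.

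The third step is to verify that the well-defined matrix from the second step really commutes with $\rho_\chi(G)$. The defining formula is invariant under replacing $g$ by $gx$: indeed $\myU(gx)$ and $\myU(tgx)$ both transform compatibly with the action, by the cocycle identity $\myH(gx)=\myH(g)\myH(\myT(g)x)$. Reversing the first step then gives $\rho_\chi(x)M\rho_\chi(x)^{-1}=M$.

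Finally, basis matrices for distinct orbitals have disjoint supports, so they are linearly independent. By the first step they span $\Cr(\rho_\chi)$, since any $M$ in the algebra equals $\sum_t m(1,t)B_t$, summed over orientable representatives, with the non-orientable coefficients already shown to vanish.
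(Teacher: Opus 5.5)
Your proof is correct and follows the argument the paper itself outlines before citing \cite[Thm.~3.4]{Monrep}. Entries along an orbital are forced to equal $m(1,t)\,\myU(g)^{-1}\myU(tg)$, and the ambiguity in the choice of $g$ is exactly an element of $H\cap t^{-1}Ht$, so orientability is precisely well-definedness and its failure forces $m(1,t)=0$. Your extra check via the cocycle identity $\myH(gx)=\myH(g)\myH(\myT(g)x)$ that the resulting matrix really lies in $\Cr(\rho_\chi)$, together with the disjoint supports of the basis matrices, correctly completes the basis claim.
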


By Maschke's theorem $\Cr(\rho_\chi)$ is semisimple; when $\rho_\chi$ is
multiplicity-free it is commutative of dimension~$r$, the basis matrices
$M_1, \ldots, M_r$ of Theorem~\ref{thm:basis} are simultaneously
diagonalisable with common eigenspaces $V_1, \ldots, V_r$, and the
\textit{character table} $\CT\bigl(\Cr(\rho_\chi)\bigr) =
[\lambda_{i,j}]_{i,j}$ records the eigenvalue $\lambda_{i,j}$ of $M_j$
on $V_i$.

\begin{proposition}[\cite{Monrep}, Prop.~5.1]\label{prop:CT}
Let $\{t_1, \ldots, t_r\}$ be representatives of the $H$-double cosets in $G$, and set $k_i = |H : H \cap t_i^{-1}Ht_i|$. Let $M_{G,H}$ be the submatrix of the character table of $G$ whose rows are indexed by the irreducible constituents of $\rho_\chi$, and let $L$ be the matrix with entries
\[
\ell(t_i, C) = \sum_{h \in H} \delta_C(ht_i)\,\chi(h^{-1}),
\]
where $C$ ranges over conjugacy classes of $G$ and $\delta_C$ is the characteristic function of $C$. Then
\[
\CT\bigl(\Cr(\rho_\chi)\bigr) = \tfrac{1}{|H|}\, M_{G,H}\, L^{\top}\, \diag(k_1, \ldots, k_r).
\]
\end{proposition}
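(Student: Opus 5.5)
The plan is to compute the eigenvalue of a basis matrix $M_j$ on $V_i$ as a trace ratio: $\lambda_{i,j} = \mathrm{tr}(E_i M_j)/\mathrm{tr}(E_i)$, where $E_i$ is the central idempotent of $\mathbb{C}G$ for the irreducible constituent $\psi_i$, acting through $\rho_\chi$. Multiplicity-freeness gives $\mathrm{tr}(E_i)=\psi_i(1)$, and $E_i = \frac{\psi_i(1)}{|G|}\sum_{g\in G}\overline{\psi_i(g)}\rho_\chi(g)$. So
\[
\lambda_{i,j} = \frac{1}{|G|}\sum_{g\in G}\overline{\psi_i(g)}\,\mathrm{tr}\bigl(\rho_\chi(g)M_j\bigr),
\]
and everything reduces to computing $\mathrm{tr}(\rho_\chi(g)M_j)$ explicitly.

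For the second step, by Proposition~\ref{prop:monrep} we have $\mathrm{tr}(\rho_\chi(g)M_j)=\sum_{a,b}\chi^+(t_a g t_b^{-1})\,m_j(t_b,t_a)$. I then use the orbital description following Proposition~\ref{prop:monrep}: $m_j(t_b,t_a)$ is nonzero only when $t_a t_b^{-1}$ lies in the double coset $H t_j H$ (with orientation as given), and its value is a ratio of $\myU$-values. The nonzero terms require $t_a g t_b^{-1}\in H$. Writing $t_a g t_b^{-1}=h$ gives $g = t_a^{-1} h t_b$, so $g$ is conjugate to $h t_b t_a^{-1}$, which lies in $H t_j^{-1}H$ up to the orientation convention. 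I will reparametrise the double sum over pairs $(a,b)$ and the group element as a sum over $h\in H$ and over the $k_j$ right cosets of $H$ contained in $Ht_jH$. Orientability (the condition $\chi(tht^{-1}h^{-1})=1$) is exactly what makes the character factors collapse to a single factor $\chi(h^{-1})$, independent of the chosen coset representative.

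Summing over $g$ in a conjugacy class $C$ then counts elements $h\in H$ with $ht_j\in C$, weighted by $\chi(h^{-1})$. This is $\ell(t_j,C)$. Each of the $k_j$ cosets contributes equally, which gives the factor $k_j$. The class sizes combine with $1/|G|$ and the $n=|G:H|$ row sums. Together with the $1/|H|$ normalisation this yields
\[
\lambda_{i,j}=\frac{k_j}{|H|}\sum_C \psi_i(C)\,\ell(t_j,C).
\]
Up to the complex-conjugation and inverse convention, this is the $(i,j)$ entry of $\frac{1}{|H|}M_{G,H}L^{\top}\diag(k_1,\dots,k_r)$. Here $\psi_i(C)$ versus $\overline{\psi_i(C)}$ is absorbed by the substitution $g\mapsto g^{-1}$ together with $\chi(h^{-1})$.

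The main obstacle is the bookkeeping in the second step: getting the orientation of the orbital representative $(1,t_j)$ against $(t_j,1)$ right, and verifying that the cocycle factors $\myU(g)^{-1}\myU(tg)$ telescope to exactly $\chi(h^{-1})$. I would first verify this on the trivial-character case, where it reduces to the classical formula for Hecke algebra eigenvalues. I would then track the extra factor of $\chi$ through a single coset, using the orientability criterion.
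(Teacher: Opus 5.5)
The paper gives no proof of this statement; it is quoted from \cite{Monrep}, so there is no in-text argument to compare yours with. Judged on its own, your route is correct and yields exactly the stated formula.

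The identity $\lambda_{i,j}=|G|^{-1}\sum_{g}\overline{\psi_i(g)}\,\mathrm{tr}(\rho_\chi(g)M_j)$ holds because multiplicity-freeness makes $E_i$ the projection onto the irreducible summand $V_i$, of trace $\psi_i(1)$. The bookkeeping you flag also closes cleanly:
\begin{itemize}
\item For a pair $(t_b,t_a)$ in the orbital of $(1,t_j)$, write $t_jh_1t_b=h_2t_a$ with $h_1,h_2\in H$. Then $m_j(t_b,t_a)=\chi(h_1)^{-1}\chi(h_2)$ and $t_bt_a^{-1}=h_1^{-1}t_j^{-1}h_2$.
\item Substitute $g=t_a^{-1}ht_b$ and then $h\mapsto h_2^{-1}hh_1$. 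The contribution of this pair becomes $\sum_{h\in H}\overline{\psi_i(ht_j^{-1})}\chi(h)=\sum_{h\in H}\psi_i(ht_j)\chi(h^{-1})=\sum_C\psi_i(C)\,\ell(t_j,C)$, the same for every pair.
\item The orbital has $nk_j$ pairs, and $nk_j/|G|=k_j/|H|$. That is the entire normalisation.
\end{itemize}
Contrary to your remark, no class sizes enter anywhere. Orientability is used only through the well-definedness of $m_j$, which is exactly what makes the factors $\chi(h_1)$ and $\chi(h_2)$ cancel.

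For comparison, the other standard route goes through the Hecke algebra $e\,\C G\,e$ with $e=|H|^{-1}\sum_{h\in H}\chi(h^{-1})h$. By Frobenius reciprocity, $e$ cuts each constituent $\psi_i$ down to a line. On that line $et_je$ acts by the scalar $\psi_i(et_j)=|H|^{-1}\sum_{h}\psi_i(ht_j)\chi(h^{-1})$, and the factor $k_j$ comes from $|Ht_jH|=k_j|H|$. Your trace computation avoids constructing the isomorphism $\Cr(\rho_\chi)\cong e\,\C G\,e$ and fixing its conventions, at the cost of the explicit orbital bookkeeping above.
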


Any $M = \sum_{i=1}^r \alpha_i M_i \in \Cr(\rho_\chi)$ has eigenvalue vector $\CT\bigl(\Cr(\rho_\chi)\bigr)\underline{\alpha}$, and $WW^* = wI_n$ holds if and only if every eigenvalue of $W$ has modulus $\sqrt{w}$; so $M$ is a $\CGW(n,w;k)$ if and only if $\underline{\alpha} \in \bigl(\{0\} \cup \langle\zeta_k\rangle\bigr)^r$ and every entry of $\CT(\Cr(\rho_\chi))\underline{\alpha}$ has modulus $\sqrt{w}$.  The coefficient space is finite, so a complete search is feasible.

\begin{algorithm}\label{alg:search}
\textbf{Search for $\CGW(n,w;k)$ matrices in the centraliser algebra of a rank-$r$ group $G$.}
\begin{enumerate}
\item Compute the Schur cover $\widehat{G}$ of $G$ and the preimage $\widehat{H}$ of a point stabiliser $H \leq G$.
\item For each linear character $\chi$ of $\widehat{H}$, compute the centraliser basis $\{M_1, \ldots, M_r\}$ via Theorem~\ref{thm:basis}, and the character table $\CT(\Cr(\rho_\chi))$ via Proposition~\ref{prop:CT}.
\item For each $\underline{\alpha} \in \bigl(\{0\} \cup \langle\zeta_k\rangle\bigr)^r$, form $W = \sum_{i=1}^r \alpha_i M_i$ and check whether $WW^{\ast}$ is a scalar multiple of $I_n$; if so, record $W$ as a $\CGW$ matrix.
\end{enumerate}
\end{algorithm}

The coefficient space in Step~3 has $(k+1)^r$ elements, manageable for $r \leq 5$; because $\CGW$ entries are drawn from a finite set rather than the continuum of complex units, this complete enumeration replaces the Gr\"{o}bner basis computation of~\cite{Monrep}.

\subsection{Schur covers}\label{sec:schur}

Step~1 of the algorithm is justified by the structure theory of strong
automorphism groups, developed for complex Hadamard matrices in
\cite[Section~6]{Monrep}; we record the adaptation to matrices with zero
entries.  Let $W \in \CGW(n,w;k)$, let
$\mathbb{T} = \{z \in \mathbb{C} : |z| = 1\}$, let
$\Gamma = \{L \in \Mon_n(\mathbb{T}) : LWL^{\ast} = W\}$ -- the
first-component projection of the strong automorphism group of $W$,
computed over $\mathbb{T}$ rather than $\langle \zeta_k \rangle$ -- and
let $\pi \colon \Gamma \rightarrow \mathrm{Sym}_n$ be the projection onto
the underlying permutation group.  Call $W$ \emph{indecomposable} if the
graph on $\{1, \ldots, n\}$ with an edge $\{i,j\}$ whenever
$W_{ij} \neq 0$ is connected.  For indecomposable $W$, the kernel of
$\pi$ consists of scalar matrices: a diagonal matrix
$D = \diag(d_1, \ldots, d_n)$ in $\Gamma$ satisfies
$d_i \overline{d_j} = 1$ whenever $W_{ij} \neq 0$, and connectivity
forces all $d_i$ to be equal.  (For a Hadamard matrix every entry is
nonzero, and this is \cite[Prop.~6.1]{Monrep}.)  The next proposition
shows that the search over the linear characters of a single Schur cover
is exhaustive: every indecomposable $\CGW$ matrix whose unimodular strong
automorphism group projects onto $G$ lies, up to monomial equivalence, in
one of the centraliser algebras searched.  No perfectness hypothesis is
required -- \cite[Prop.~6.2]{Monrep} identifies the determinant-one
subgroup of $\Gamma$ exactly when $G$ and that subgroup are perfect, but
exhaustiveness holds in general -- and any one Schur cover suffices, even
though covers of imperfect groups need not be unique.

\begin{proposition}\label{prop:lifting}
Let $W \in \CGW(n,w;k)$ be indecomposable, let
$\Gamma = \{L \in \Mon_n(\mathbb{T}) : LWL^{\ast} = W\}$, and let
$G \leq \pi(\Gamma)$ be a transitive subgroup with point stabiliser $H$.
Let $\widehat{G}$ be any Schur cover of $G$ and let $\widehat{H}$ be the
full preimage of $H$ in $\widehat{G}$.  Then there is a linear character
$\lambda$ of $\widehat{H}$ such that $W$ is monomially equivalent to a
matrix in $\Cr(\rho_\lambda)$, where
$\rho_\lambda = \lambda \myuparrow_{\widehat{H}}^{\widehat{G}}$.
\end{proposition}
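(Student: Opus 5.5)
We view $\Gamma$ as a central extension of $\pi(\Gamma)$ by scalars, restrict it to $G$, and lift this projective representation to the Schur cover $\widehat{G}$; the lift is then a monomial representation of $\widehat{G}$ commuting with $W$.

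\textbf{Step 1: a projective monomial representation of $G$.} Set $\Gamma_G = \pi^{-1}(G) \cap \Gamma$. Since $W$ is indecomposable, the discussion preceding the proposition shows $\ker \pi \cap \Gamma$ consists of scalar matrices in $\mathbb{T}I_n$. For each $g \in G$ choose a section $s(g) \in \Gamma_G$ with $\pi(s(g)) = g$. Then $s(g)s(h) = c(g,h)\,s(gh)$ for a $2$-cocycle $c \colon G \times G \to \mathbb{T}$. So $s$ is a projective representation of $G$ by monomial matrices commuting with $W$; note that $LWL^{\ast}=W$ with $L$ unitary gives $LW = WL$.

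\textbf{Step 2: lifting to $\widehat{G}$.} We use Schur's defining property of a Schur cover, valid for any Schur cover and without perfectness: every projective representation of $G$ over $\mathbb{C}$ is projectively equivalent to one that lifts to a genuine linear representation of $\widehat{G}$. Concretely, there are a function $\mu \colon \widehat{G} \to \mathbb{C}^{\times}$ and a homomorphism $\rho \colon \widehat{G} \to \mathrm{GL}_n(\mathbb{C})$ with $\rho(x) = \mu(x)\, s(\bar x)$, where $\bar x$ is the image of $x$ in $G$. The standard construction passes through the class $[c] \in H^2(G,\mathbb{C}^\times)$, which is trivialised on $\widehat{G}$. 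Each $\rho(x)$ is a scalar multiple of a monomial matrix commuting with $W$, hence is monomial and commutes with $W$. By finiteness of $\widehat{G}$, its entries are roots of unity, of unit modulus. The underlying permutation action of $\rho$ is $G$, which is transitive, so $\rho$ is a transitive monomial representation of $\widehat{G}$ whose point stabiliser is exactly $\widehat{H}$, the preimage of $H$.

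\textbf{Step 3: identify $\rho$ as induced.} By Proposition~\ref{prop:monrep}, $\rho$ is induced from the linear character $\lambda$ of $\widehat{H}$ given by the diagonal entry of $\rho(x)$ at the fixed point. More precisely, $\rho$ is conjugate by a diagonal matrix $D$, with entries chosen along the transversal, and a permutation matrix $P$ to $\rho_\lambda$ written in the form of Proposition~\ref{prop:monrep}. Since $D$ can be taken with unit-modulus entries, $W' = (PD)W(PD)^{\ast}$ commutes with $\rho_\lambda(\widehat{G})$, so $W' \in \Cr(\rho_\lambda)$.

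\textbf{Main obstacle.} We must keep $W'$ in $\CGW(n,w;k)$, i.e. have $D$ rescale by $k$th roots of unity. The natural diagonal conjugator has entries $\rho(t_i)_{1,\cdot}$, which are roots of unity of order dividing $|\widehat{G}|$ and need not be $k$th roots. We handle this with the row normalisation of Proposition~3.2 of~\cite{Monrep}. The basis matrix on each orientable orbital is normalised by $m(1,t)=1$, so $W' = \sum \alpha_i M_i$ with $\alpha_i$ equal to entries of $W'$ in row $1$. Then $W$ is monomially equivalent, in the sense used in the search, to the matrix with the same coefficients, provided the eigenvalue criterion for $W'$ is preserved. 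If "monomially equivalent" is read over $\mathbb{T}$, this step is immediate. Otherwise, we use the freedom to twist $\lambda$ by a linear character of $\widehat{G}$, together with the choice of diagonal normaliser, to rescale so that $(W')_{1,t_i} = W_{1,\cdot}$ entries already lie in $\langle\zeta_k\rangle$. I would first verify that the intended meaning is equivalence over $\Mon_n(\mathbb{T})$, which makes the lifting argument of Step~2 the real content.
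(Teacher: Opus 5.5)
Your Steps 1--3 are correct and follow essentially the same route as the paper's proof. The kernel of $\pi$ on $\Gamma$ is scalar by indecomposability, so $\Gamma_G$ gives a projective representation of $G$, and this lifts to a linear representation of the Schur cover $\widehat{G}$. The lift is a transitive monomial representation with point stabiliser $\widehat{H}$ that commutes with $W$; you rightly note that $LWL^{\ast}=W$ with $L$ unitary means $LW=WL$. Proposition~\ref{prop:monrep} then conjugates it to $\rho_\lambda=\lambda\myuparrow_{\widehat{H}}^{\widehat{G}}$.

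Your ``main obstacle'' is not part of the statement. The paper's monomial equivalence is by unit-modulus (root-of-unity) monomial matrices, and it has to be, since elements of $\Cr(\rho_\lambda)$ have entries in $\langle\zeta_K\rangle$ with $K=\mathrm{lcm}(k,\mathrm{ord}(\lambda))$. You can drop that paragraph, including the unsupported twisting-by-characters fix.
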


\begin{proof}
Let $\Gamma_G = \pi^{-1}(G) \cap \Gamma$.  Since $W$ is indecomposable, the
kernel of $\pi$ on $\Gamma$ consists of scalar matrices, so assigning to
$g \in G$ the image modulo scalars of any preimage in $\Gamma_G$ is a
well-defined homomorphism
$\bar{\rho} \colon G \rightarrow \mathrm{PGL}_n(\mathbb{C})$, that is, a
projective representation of $G$.  By Schur's lifting theorem, every
projective representation of a finite group lifts to a linear
representation of each of its representation groups, and every stem
extension of $G$ by the full Schur multiplier -- in particular every Schur
cover -- is a representation group~\cite{Karpilovsky}.  Thus there is a
linear representation
$\widehat{\rho} \colon \widehat{G} \rightarrow \mathrm{GL}_n(\mathbb{C})$
such that $\widehat{\rho}(\widehat{G})$ and $\Gamma_G$ have the same image
modulo scalars.  In particular, every $\widehat{\rho}(x)$ is a scalar
multiple of a monomial matrix, hence monomial, and the permutation
underlying $\widehat{\rho}(x)$ is the image of $x$ in $G$.  So
$\widehat{\rho}$ is a transitive monomial representation of $\widehat{G}$
whose point stabiliser is exactly $\widehat{H}$, and by
Proposition~\ref{prop:monrep} it is monomially equivalent to
$\rho_\lambda = \lambda \myuparrow_{\widehat{H}}^{\widehat{G}}$ for some
linear character $\lambda$ of $\widehat{H}$.  Finally, a matrix commutes
with a monomial matrix $L$ if and only if it commutes with every scalar
multiple of $L$, so
$\Cr(\widehat{\rho}(\widehat{G})) = \Cr(\Gamma_G) \supseteq \Cr(\Gamma) \ni W$;
applying the monomial change of basis carrying $\widehat{\rho}$ to
$\rho_\lambda$ yields the claim.
\end{proof}

\subsection{A non-existence criterion}\label{sec:lam-leung}

We derive a necessary condition for the existence of $\CGW$ matrices in a centraliser
algebra, using the following theorem of Lam and Leung.

\begin{theorem}[{\cite[Theorem~1.2]{LamLeung}}]\label{thm:lam-leung}
There exists a vanishing sum $\sum_{i=1}^{\mu} \zeta_k^{a_i} = 0$ of $\mu$ many
$k$-th roots of unity if and only if $\mu$ is expressible as a non-negative integer
combination of the distinct prime factors of $k$.
\end{theorem}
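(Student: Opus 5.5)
The plan is to prove the two directions separately. The reverse implication is an explicit construction, and the forward implication goes through the structure theory of minimal vanishing sums. Throughout, let $p_1,\dots,p_s$ be the distinct primes dividing $k$.

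For the ``if'' direction, suppose $\mu=\sum_j c_j p_j$ with $c_j\ge 0$. For each prime $p\mid k$ the $p$-th roots of unity $1,\zeta_p,\dots,\zeta_p^{p-1}$ are $k$-th roots of unity (since $\zeta_p=\zeta_k^{k/p}$), and they sum to zero because they are the roots of $x^p-1$, whose $x^{p-1}$ coefficient vanishes. Concatenating $c_j$ copies of this sum for each $j$ gives a vanishing sum of exactly $\mu$ terms. The statement allows repeated terms, so repetition is harmless. If the empty sum ($\mu=0$) is counted, it is trivially covered.

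For the ``only if'' direction, I would work in the group ring $\Z[C_k]$ with $C_k=\langle x\rangle$. A vanishing sum with $\mu$ terms is then an element $\sum_i x^{a_i}$ with nonnegative coefficients summing to $\mu$ that lies in the kernel of $x\mapsto\zeta_k$. I would argue by induction on $\mu$, reducing to \emph{minimal} vanishing sums, that is, those with no proper nonempty vanishing subsum. A general vanishing sum splits as a disjoint union of minimal ones, so the set of achievable $\mu$ is closed under addition. It therefore suffices to show that every minimal vanishing sum has length in $\mathbb{N}p_1+\dots+\mathbb{N}p_s$.

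The key structural input is the theorem of R\'edei, de~Bruijn and Schoenberg. It says that the kernel of $\Z[C_k]\to\Z[\zeta_k]$ is generated as an ideal by the elements $\sigma_p=1+x^{k/p}+\dots+x^{(p-1)k/p}$ for primes $p\mid k$. Consequently any vanishing sum can be written as $\sum_p f_p\,\sigma_p$ with $f_p\in\Z[C_k]$. Applying the augmentation map $\varepsilon$ gives
\[
\mu=\sum_p p\,\varepsilon(f_p).
\]
This yields only an integer combination, not a nonnegative one, and that is the main obstacle. Nonnegativity genuinely fails in general for the coefficient polynomials: Lam and Leung's theorem is sharp precisely because of cancellation. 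For $k=pq$, however, the $f_p$ can be chosen with nonnegative coefficients.

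To overcome this I would follow Lam--Leung's strategy of inducting on the number of primes dividing $k$. Write $k=p^a m$ with $p\nmid m$, and decompose the sum by cosets $\zeta_k^{j}\langle\zeta_m\rangle$ relative to the subfield $\mathbb{Q}(\zeta_m)$, using that $\{1,\zeta_{p^a},\dots,\zeta_{p^a}^{p-2}\}$-type bases express $\mathbb{Q}(\zeta_k)$ over $\mathbb{Q}(\zeta_m)$ (after first reducing to $a=1$ by grouping exponents modulo $p^{a-1}$). The vanishing condition forces the $p$ partial sums $S_0,\dots,S_{p-1}$ (each a sum of $m$-th roots times a fixed twist) to be equal elements $\beta$ of $\mathbb{Q}(\zeta_m)$. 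If $\beta=0$, each $S_j$ is a vanishing sum for $m$, and induction applies. If $\beta\ne0$, minimality lets one subtract a common sum and reduce the length by a multiple of $p$, landing again in a case covered by induction. Carrying out this case analysis carefully, so that lengths remain nonnegative combinations, is the technical heart.

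In the paper's context, I would cite \cite{LamLeung} for this step rather than reprove it.
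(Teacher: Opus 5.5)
The paper gives no proof of this theorem; it quotes it from \cite{LamLeung}, and the only consequence it draws (after Proposition~\ref{prop:lam-leung-cgw}) is that $k\mid\mu$ when $k$ is prime. Your construction for the ``if'' direction is correct, and deferring the ``only if'' direction to \cite{LamLeung} is the paper's own route. Your R\'edei--de~Bruijn--Schoenberg identity already proves the converse outright when $k$ is a prime power, since then $\mu=p\,\varepsilon(f_p)$; this covers every use the paper makes of the result.

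Do not present your outline of the general converse as a proof, because its key step fails. Write the sum as $\sum_j\zeta_p^jS_j$, where each $S_j$ is a sum of $m$-th roots of unity and all $S_j$ equal $\beta$.

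The only thing that can be \emph{subtracted} is the vanishing block $(1+\zeta_p+\dots+\zeta_p^{p-1})T$, where $T$ is the coefficientwise minimum of the $S_j$. Minimality then forces either $T=0$ or the whole sum to equal that block. So in every nontrivial minimal case with $\beta\neq0$, nothing can be removed and you never land in the $\beta=0$ case. The $S_j$ are then different nonnegative representations of $\beta$, and their lengths may differ.

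Here is a concrete instance: take $k=105$ and $p=7$, with $S_0=\zeta_3+\zeta_3^2$ and $S_1=\dots=S_6=\zeta_5+\zeta_5^2+\zeta_5^3+\zeta_5^4$, both equal to $-1$. This gives a minimal vanishing sum of length $26$ with no common part.

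Controlling $\sum_j|S_j|$ is the whole difficulty. R\'edei--de~Bruijn--Schoenberg gives $|S_i|-|S_j|$ only as an integer combination of the primes dividing $m$, which is the same obstruction you identified earlier. When $m$ is a power of a prime $q$, this yields $|S_i|\equiv|S_j|\pmod q$. Comparing with the shortest $S_j$ then gives $\mu=p\min_j|S_j|+q\cdot(\text{nonnegative})$, which settles the two-prime case. With three or more primes the congruence is vacuous, so the additional argument of Lam--Leung is genuinely needed. In your proposal, as in the paper, the citation carries the entire ``only if'' direction.
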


\begin{proposition}\label{prop:lam-leung-cgw}
Let $G$ be a rank-$3$ permutation group on $n$ points, with point stabiliser $H$. Let $\Gamma$ be a strongly regular graph with parameters $(n, \alpha, \lambda, \mu)$ on which $G$ acts. Let $\hat{G}$ be the Schur multiplier of $G$, let $\hat{H}$ be the preimage of $H$ in $\hat{G}$, and let $\chi$ be a linear character of $\hat{H}$. Suppose that $W \in \Cr(\rho_\chi)$ is a $\CGW(n,w;k)$. Then there exists a vanishing sum of $k^{\textrm{th}}$ roots of unity of length $\mu$. 
\end{proposition}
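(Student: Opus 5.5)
The plan is to read off a vanishing sum directly from an off-diagonal entry of the identity $WW^{\ast} = wI_n$. The search space contains no other source of cancellation. Since $G$ has rank $3$, Theorem~\ref{thm:basis} lets us write $W = \alpha_0 M_0 + \alpha_1 M_1 + \alpha_2 M_2$. Here $M_0$ is a diagonal basis matrix supported on the trivial orbital. The matrices $M_1$ and $M_2$ are supported on the two nontrivial orbitals, namely the edge set of $\Gamma$ and the edge set of its complement; if an orbital is non-orientable, its coefficient is simply $0$. Every nonzero entry of $W$ is a $k$-th root of unity, because $W$ is a $\CGW(n,w;k)$. So for $i \neq j$ the equation
\[
0 = (WW^{\ast})_{ij} = \sum_{l=1}^{n} W_{il}\,\overline{W_{jl}}
\]
expresses $0$ as a sum of $k$-th roots of unity. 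The number of terms in this sum is the number of $l$ with $W_{il}W_{jl} \neq 0$.

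The key step is to count those $l$ using the strong regularity of $\Gamma$. I would first treat the main case, $\alpha_0 = \alpha_2 = 0$, so that the support of $W$ is exactly the adjacency relation of $\Gamma$. I would then choose $i,j$ non-adjacent. The indices $l$ with both $W_{il}$ and $W_{jl}$ nonzero are exactly the common neighbours of $i$ and $j$. There are precisely $\mu$ of these, so $(WW^{\ast})_{ij} = 0$ is a vanishing sum of exactly $\mu$ $k$-th roots of unity, which is the claim. By Theorem~\ref{thm:lam-leung}, this is equivalent to $\mu$ being a non-negative integer combination of the primes dividing $k$.

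The remaining cases, where the support of $W$ is a different union of orbitals, are where I expect the main obstacle to lie. I would handle them in two steps.

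First, if the support is exactly the complement of $\Gamma$, possibly together with the diagonal, I would pass to the complementary strongly regular graph, which $G$ also preserves. Equivalently, I would read the hypothesis as letting $\Gamma$ be the orbital graph on which $W$ is supported. In that reading the argument of the main case applies verbatim, with the common-neighbour count of the relevant graph in place of $\mu$.

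Second, for mixed supports, I would compute the number of nonzero terms for adjacent and for non-adjacent pairs. These counts are linear in $n$, $\alpha$, $\lambda$, $\mu$, with contributions from $l \in \{i,j\}$ and from the four neighbourhood types of $l$. For example, with full support the count is $n$ for every pair. One would then have to argue that these counts still force a length-$\mu$ vanishing sum via Theorem~\ref{thm:lam-leung}, and I am not confident this holds in general. If it fails, the honest statement is that the proposition applies with $\Gamma$ taken to be the support graph of $W$, and the proof should be restricted to that normalised situation. The restricted statement is exactly what the common-neighbour count above establishes.
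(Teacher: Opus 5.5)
Your main-case argument is the paper's proof: take $i,j$ non-adjacent in $\Gamma$, read off $(WW^{\ast})_{ij}=0$, and count the $\mu$ common neighbours. The paper states one detail explicitly that you only cover with ``verbatim'': a nonzero diagonal is harmless. For non-adjacent $i\neq j$, the terms $l=i$ and $l=j$ are $W_{ii}\overline{W_{ji}}$ and $W_{ij}\overline{W_{jj}}$, and both vanish because $W_{ij}=W_{ji}=0$. Your restricted statement is therefore exactly what the paper's proof establishes: the off-diagonal support of $W$ is the edge set of $\Gamma$, and the diagonal is arbitrary. The paper adopts this reading tacitly, in the sentence ``the number of such terms equals $\mu$''.

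Your suspicion about the remaining cases is correct, and you can settle it rather than leave it open: the proposition as literally stated is false.

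\begin{itemize}
\item The census contains a $\BH(36,3)=\CGW(36,36;3)$ invariant under $A_6\wr S_2$ in its rank-$3$ product action on $36$ points. The two orbital graphs are the lattice graph $L_2(6)$, an $\mathrm{SRG}(36,10,4,2)$, and its complement, an $\mathrm{SRG}(36,25,16,20)$. Neither $2$ nor $20$ is a multiple of $3$, so Theorem~\ref{thm:lam-leung} rules out a vanishing sum of $\mu$ cube roots of unity for either choice of $\Gamma$.
\item For full support, each off-diagonal entry of $WW^{\ast}$ gives a vanishing sum of length $n=36$, and nothing shorter.
\item Even for single-orbital support the choice of $\Gamma$ matters. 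The $\We(35,16)$ invariant under $A_8\cong\mathrm{PSL}(4,2)$ is supported on skew lines of $\mathrm{PG}(3,2)$, an $\mathrm{SRG}(35,16,6,8)$. The other orbital graph, intersecting lines, is an $\mathrm{SRG}(35,18,9,9)$, and $\mu=9$ is odd, so no vanishing sum of nine signs exists.
\end{itemize}

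So the hypothesis you arrived at is the correct one, and there is nothing to rescue in the mixed-support cases.
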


\begin{proof}
For any two non-adjacent rows $i,j$ of $W$, the $(i,j)$-entry of $WW^* = wI_n$ gives $\sum_\ell W_{i\ell}\,\overline{W_{j\ell}} = 0$.
Each nonzero term is a $k$-th root of unity, and the number of such terms equals $\mu$. Since $i, j$ are non-adjacent, the number of non-zero terms in the sum does not depend on whether the diagonal is zero or non-zero, and the result follows. 
\end{proof}

If $k$ is prime, then $\mu$ is divisible by $k$; this yields a number of non-existence results for strongly regular graphs with small parameters: e.g. the Clebsch graph has $\mu = 2$, so it cannot yield a $\CGW(16,5;5)$ or $\CGW(16,5;3)$. A more general argument may be obtained in terms of intersection numbers of the centraliser algebra, considered as an association scheme, but we do not pursue this here.

\section{Results}\label{sec:families}

We applied Algorithm~\ref{alg:search} to the primitive permutation
groups of rank $3$, $4$ and $5$ and degree $\leq 80$. This section records the scope of the computation, and describes some matrices found. The complete classification appears in the tables of the appendix.

The search was implemented independently by two of the authors, in
\textsc{Magma} and in \textsc{SageMath}/\textsc{GAP} (the latter using the
\textsc{AtlasRep} database~\cite{AtlasRep} for the covers of almost-simple
groups). Algorithm~\ref{alg:search} was run on every primitive group of
rank $r \in \{3,4,5\}$ and degree $n \leq 80$ other than $n=64$. At degree $64$ there are groups of affine type for which standard routines in {\sc SAGE} and {\sc Magma} did not return Schur covers. The search at this degree is therefore
incomplete, though a number of matrices were found. Note that a group may act
primitively on $n$ points in more than one way, and the preimage of a
point stabiliser in a Schur cover may comprise several conjugacy classes
of subgroups of index $n$; each such action and each such class is
treated separately, since the matrices they produce need not be
monomially equivalent. 

We run the search for each
$k \in \{2,3,4,5,6\}$; normalising the coefficient of $M_1 = I_n$ to
$\{0,1\}$ by a global scalar leaves $2(k+1)^{r-1}$ candidates per character.
A partial search at larger degrees is described below.  We emphasise that the
bound $k \leq 6$ applies to the \emph{coefficients} of the linear
combination: the entries of the basis matrices $M_i$ are values of $\chi$,
so the entries of a matrix returned by the search lie in
$\langle \zeta_K \rangle$ with $K = \mathrm{lcm}(k, \mathrm{ord}(\chi))$,
and phases as large as $K = 30$ occur. Each matrix in our census is
labelled by its \emph{minimal} phase, found by checking, for every proper
divisor $k'$ of $k$, whether some scalar multiple $xW$ has all entries in
$\{0\} \cup \langle \zeta_{k'} \rangle$.
This computation is exhaustive, so the resulting classification is
complete, with the caveats noted above. Combining the computations with Proposition~\ref{prop:lifting} gives a precise statement of what has been classified.

\begin{theorem}\label{thm:scope}
Let $n \leq 80$ with $n \neq 64$, $r \in \{2,3,4,5\}$ and
$k \in \{2,3,4,5,6\}$.  Let $W$ be an
indecomposable $\CGW$ matrix of order $n$ such that
\begin{enumerate}
\item the unimodular strong automorphism group of $W$ projects onto a
primitive permutation group $G$ of degree $n$ and rank $r$; and
\item some matrix in $\Cr(\rho_\lambda)$ monomially equivalent to $W$, as
furnished by Proposition~\ref{prop:lifting}, has -- after multiplication by
a global scalar -- its coefficient of $M_1 = I_n$ in $\{0,1\}$ and all
remaining orbital coefficients in $\{0\} \cup \langle \zeta_k \rangle$.
\end{enumerate}
Then $W$ is monomially equivalent to a matrix in our census.
\end{theorem}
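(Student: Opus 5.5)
The plan is to reduce the statement to Proposition~\ref{prop:lifting} and then to the exhaustiveness of Algorithm~\ref{alg:search} as it was actually run. Let $W$ satisfy the hypotheses and let $G$ be the primitive group of degree $n$ and rank $r$ from condition~1. Because $G$ is primitive it is transitive, so Proposition~\ref{prop:lifting} applies to $G$ with point stabiliser $H$. For whichever Schur cover $\widehat{G}$ our software returned, it provides a linear character $\lambda$ of $\widehat{H}$ and a monomial matrix $P$ such that $W' = PWP^{\ast} \in \Cr(\rho_\lambda)$.

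The proposition holds for every Schur cover. This matters, since for imperfect $G$ the cover computed by \textsc{Magma} or \textsc{GAP} need not agree with any cover chosen in advance. Since $n \neq 64$, such a cover was computed in every case.

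Next I place $W'$ among the candidates of Step~3. By Theorem~\ref{thm:basis}, $W'$ is a linear combination $\sum_i \alpha_i M_i$ of basis matrices, one for each orientable orbital. There are at most $r$ of these, because the orbitals of $\widehat{G}$ on $T\times T$ coincide with those of $G$: the kernel of $\widehat{G}\to G$ acts trivially on points. Condition~2 says that, after a global scalar, some such $W'$ has $\alpha_1\in\{0,1\}$ and $\alpha_i \in \{0\}\cup\langle\zeta_k\rangle$ for every other $i$. Multiplying by a scalar does not change monomial equivalence class or the $\CGW$ property, up to adjusting $k$ by the minimal-phase relabelling. So $W'$ is exactly one of the $2(k+1)^{r-1}$ vectors tested for the pair $(\widehat{G},\lambda)$. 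Because $W'W'^{\ast}=wI_n$, the check in Step~3 records it.

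The main obstacle is bookkeeping rather than mathematics. I must verify that the pair $(H,\lambda)$ produced by the lifting was actually visited by the search. Two points need checking.

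First, $G$ may have several inequivalent primitive actions of degree $n$, and the preimage of a point stabiliser may split into several conjugacy classes of index-$n$ subgroups of $\widehat{G}$. The search treats each action and each class separately. Replacing $\widehat{H}$ by a conjugate, and $\lambda$ by the correspondingly conjugated character, gives a monomially equivalent representation. So whatever $\widehat{H}$ arises, some enumerated class contains it up to conjugacy.

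Second, the search ranges over all linear characters of each $\widehat{H}$, so $\lambda$ is included.

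The case $r=2$ needs a separate remark, since the runs were over rank $3$ to $5$. Here $G$ is $2$-transitive and the algebra is spanned by $I_n$ and the off-diagonal orbital, so I would cite the separately recorded $2$-transitive computations consistent with~\cite{Moorhouse2001}. The census stores matrices up to monomial equivalence via the canonical labels of Section~\ref{sec:equiv}. Therefore $W$, being monomially equivalent to the recorded $W'$, is equivalent to a census entry.
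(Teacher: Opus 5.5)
Your proposal is correct and takes the same route as the paper. The paper's justification is just that Proposition~\ref{prop:lifting} places $W$, up to monomial equivalence, in $\Cr(\rho_\lambda)$ for the computed Schur cover, and that enumerating all linear characters of $\widehat{H}$ and all normalised coefficient vectors is exhaustive. Your extra bookkeeping elaborates that one-line argument rather than changing it: that the orbitals of $\widehat{G}$ coincide with those of $G$, the handling of several actions and stabiliser classes, and your remark that the case $r=2$ depends on the $2$-transitive searches rather than the stated rank-$3$ to $5$ runs.
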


The complete output is presented in
Tables~\ref{tab:aut_rank2}--\ref{tab:aut_rank4} of the appendix, split by
the rank of the automorphism group: $2$-transitive groups, rank-$3$ groups,
primitive groups of rank at least $4$, and imprimitive groups. Column
headings are defined in the appendix preamble.

\subsection{Weighing matrices from finite geometry}\label{sec:geom}

Investigation of the matrices in the census for potential infinite families uncovered two previously known families coming from finite geometry. The first is supported on a projective plane.

\begin{theorem}[Seberry--Whiteman~\cite{SeberryWhiteman1975}, Berman~\cite{Berman1978}]\label{thm:seberrywhiteman}
For every prime power $q$ and $k$ dividing $q-1$ there exists a $\CGW(q^2+q+1, q^2; k)$
whose automorphism group contains $\mathrm{PSL}(3,q)$ acting $2$-transitively.
\end{theorem}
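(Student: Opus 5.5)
We take the classical Seberry--Whiteman/Berman construction. Label rows by points and columns by lines of $\mathrm{PG}(2,q)$, or, after fixing a polarity (e.g.\ the standard duality $x\mapsto x^{\perp}$ with respect to the dot product), label both by the points $\langle x\rangle$ of $\mathrm{PG}(2,q)$. The plan is to index by nonzero vectors and use a multiplicative character. Fix representatives $x\in\F_q^3\setminus\{0\}$ for the $q^2+q+1$ points. Let $\psi\colon\F_q^{\times}\to\langle\zeta_k\rangle$ be a surjective character, which exists since $k\mid q-1$. Define
\[
W_{\langle x\rangle,\langle y\rangle}=\psi(x\cdot y)\ \text{if } x\cdot y\neq 0,\qquad W_{\langle x\rangle,\langle y\rangle}=0\ \text{otherwise}.
\]
Each row has exactly $q^2$ nonzero entries, namely the points off the line $x^{\perp}$. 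Changing representatives multiplies rows and columns by roots of unity, so the construction is well defined up to monomial equivalence.

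The key step is orthogonality. For distinct points $\langle x\rangle\neq\langle x'\rangle$, we compute $\sum_{\langle y\rangle}\psi(x\cdot y)\overline{\psi(x'\cdot y)}$. Lifting to all $y\in\F_q^3$, each projective point has $q-1$ scalar multiples, and $\psi(x\cdot cy)\overline{\psi(x'\cdot cy)}=\psi(x\cdot y)\overline{\psi(x'\cdot y)}$, so the sum equals $\frac{1}{q-1}\sum_{y}\psi(x\cdot y)\overline{\psi(x'\cdot y)}$, with terms vanishing whenever a dot product is zero (extend $\psi$ by $\psi(0)=0$). Since $x,x'$ are linearly independent, the map $y\mapsto(x\cdot y,x'\cdot y)$ is onto $\F_q^2$ with fibres of size $q$. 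The sum therefore becomes
\[
\frac{q}{q-1}\Bigl(\sum_{a}\psi(a)\Bigr)\Bigl(\sum_b\overline{\psi(b)}\Bigr)=0,
\]
because $\psi$ is nontrivial when $k>1$. (For $k=1$ one uses instead the $\{0,1\}$ complement of the incidence matrix and a sign adjustment, or simply treats $k\ge2$, since the statement is vacuous or classical there.) The diagonal entries of $WW^{\ast}$ equal $q^2$, so $W\in\CGW(q^2+q+1,q^2;k)$.

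For the automorphisms, $g\in\mathrm{SL}(3,q)$ acts on rows by $x\mapsto xg$ and on columns by $y\mapsto y g^{-\top}$. This preserves $x\cdot y$ exactly, so it gives an automorphism pair $(P_g,Q_g)$ of monomial matrices whose permutation parts realise the natural $2$-transitive actions of $\mathrm{PSL}(3,q)$ on points and on lines. Fixing representatives introduces scalar factors, so $P_g,Q_g$ are monomial with entries in $\langle\zeta_k\rangle$ once $\psi$ is applied. The main obstacle is the precise interpretation of ``automorphism group contains $\mathrm{PSL}(3,q)$'': the action lifts a priori from $\mathrm{SL}(3,q)$, whose centre acts by scalars $c$ with $c^3=1$. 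These scalars give $P_g=\psi(c)I$ and $Q_g=\overline{\psi(c)}^{\,-1}\!\cdot$ similar, i.e.\ trivial modulo the scalar subgroup of automorphisms. The image in the projective automorphism group is therefore $\mathrm{PSL}(3,q)$, which is the sense in which the theorem (and the lifting framework of Proposition~\ref{prop:lifting}) should be read. Finally, to place this in the strong-automorphism framework of the census, we compose $W$ with the monomial matrix of a polarity, identifying lines with points. This realises $W$ in the centraliser algebra of the rank-$2$ monomial representation of $\mathrm{SL}(3,q)$ on points; the symmetry relation is cleanest via the non-degenerate form, which intertwines the two actions up to the inverse-transpose automorphism.
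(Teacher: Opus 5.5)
\paragraph{Comparison with the paper.} The paper gives no proof of this statement. It quotes the result from Seberry--Whiteman and Berman and only corroborates it computationally at $q=2,3$. Your argument is therefore a self-contained proof, and for $k\ge 2$ its core is correct.

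\paragraph{What your argument establishes.} The matrix $W_{\langle x\rangle,\langle y\rangle}=\psi(x\cdot y)$ is essentially the classical construction.
\begin{itemize}
\item The fibre count for $y\mapsto(x\cdot y,\,x'\cdot y)$, together with $\sum_a\psi(a)=0$, kills the off-diagonal entries of $WW^{\ast}$, and the diagonal entries equal $q^2$.
\item Because $x\cdot y$ is invariant under $x\mapsto xg$, $y\mapsto yg^{-\top}$, you obtain pairs $(P_g,Q_g)$ that are automorphisms in exactly the paper's sense. Their row permutations realise the $2$-transitive action of $\mathrm{PSL}(3,q)$ on points.
\item Your formula for the centre is garbled, but $\overline{\psi(c)}^{\,-1}=\psi(c)$, so $cI$ acts as the scalar pair $(\psi(c)I,\psi(c)I)$, as you intend.
\item When $\psi$ is nontrivial on the cube roots of unity, the lifted group is $\mathrm{SL}(3,q)$ itself. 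So reading the statement at the level of the permutation image, as you do, is the appropriate interpretation.
\end{itemize}
What this buys over the citation is that the $\mathrm{PSL}(3,q)$-symmetry follows at once from the invariance of the bilinear form.

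\paragraph{Correction 1: the case $k=1$.} The statement is false for $k=1$; it is neither vacuous nor classical there. A $\{0,1\}$-matrix with $WW^{\ast}=wI_n$ has pairwise disjoint row supports, so $w\le 1<q^2$. Any ``sign adjustment'' leaves $\{0\}\cup\langle\zeta_1\rangle$. The theorem must be read with $k>1$, as the paper does for Goldberger's theorem, and that is exactly the range your proof covers.

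\paragraph{Correction 2: delete your final paragraph.} Composing with a polarity cannot place $W$ in the centraliser algebra of a $2$-transitive monomial representation. By Theorem~\ref{thm:basis}, such an algebra is spanned by $I$ and at most one further basis matrix, which is nonzero at every off-diagonal position. Its $\CGW$ elements therefore have weight $1$, $n-1$ or $n$, never $q^2$ when $n=q^2+q+1$.

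Concretely, the polarity intertwines $g$ on points with $g^{-\top}$ on lines. The same action $x\mapsto xg$ on rows and columns preserves the support of $W$ only when $gg^{\top}$ is scalar, which gives a proper, non-transitive subgroup.

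So the $\mathrm{PSL}(3,q)$-symmetry exists only through non-strong pairs $(P_g,Q_g)$ acting differently on rows and columns. That is all the statement, with the paper's definition of automorphism, requires, so your main argument is unaffected once the incorrect claim is removed.
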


Our search recovers this family for $q = 2$ and $q = 3$: we find
$\CGW(7,4;k)$ for $k \in \{2,6\}$ with automorphism group
$\mathrm{PSL}(3,2)$, and $\CGW(13,9;k)$ for $k \in \{2,4,6\}$ with
automorphism group $\mathrm{PSL}(3,3)$; in each case the group acts
$2$-transitively and flag-transitively, with a single flag-orbit.
The second family is supported on the Grassmannian of a vector space over
a finite field, and is due to Goldberger.

\begin{theorem}[Goldberger~{\cite[Theorem~6.9]{Goldberger2024}}]\label{thm:goldberger}
Let $q$ be a prime power, let $r > 0$ be integers, and let $k > 1$
divide $q-1$.  Then there exists a
$\CGW\bigl(\genfrac{[}{]}{0pt}{}{2r}{r}_q,\; q^{r^2};\; k\bigr)$,
where $\genfrac{[}{]}{0pt}{}{2r}{r}_q$ is the Gaussian binomial coefficient,
supported on the relation of complementary pairs in the Grassmannian of
$r$-subspaces of $\F_q^{2r}$, whose automorphism group contains
$\mathrm{PGL}(2r,q)$ acting on that Grassmannian.
\end{theorem}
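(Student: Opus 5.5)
The plan is to build the matrix directly from determinants and then prove row-orthogonality with a group-averaging argument, rather than by searching.

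\emph{Construction.} Let $N=\genfrac{[}{]}{0pt}{}{2r}{r}_q$. Index rows and columns by the $r$-subspaces of $V_0=\F_q^{2r}$, and fix once and for all an $r\times 2r$ basis matrix $A_U$ for each subspace $U$. Let $\chi\colon \F_q^{\times}\to\langle\zeta_k\rangle$ be a character of exact order $k$; it exists because $k\mid q-1$. Define
\[
W[U,V]=\begin{cases}\chi\bigl(\det\begin{bmatrix}A_U\\ A_V\end{bmatrix}\bigr) & U\cap V=0,\\ 0 & \text{otherwise.}\end{cases}
\]
Changing the basis of $U$ by some $M\in\mathrm{GL}_r(\F_q)$ multiplies row $U$ and column $U$ by $\chi(\det M)$. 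Hence different choices of bases give monomially equivalent matrices. This is also why $\mathrm{PGL}(2r,q)$, acting on the Grassmannian, lies in the projection of the strong automorphism group, as in the setting of Proposition~\ref{prop:lifting}.

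\emph{Weight.} The complements of a fixed $r$-space $U$ are exactly the graphs of linear maps from a fixed complement into $U$. There are therefore $q^{r^2}$ of them. So every row and every column has exactly $q^{r^2}$ nonzero entries, all of them $k$-th roots of unity, and the diagonal of $WW^{\ast}$ equals $q^{r^2}$.

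\emph{Orthogonality, the main step.} Fix $U\neq U'$ and let $S$ be the set of $r$-spaces $V$ complementary to both. We must show $\sum_{V\in S}\chi\bigl(f(V)\bigr)=0$, where
\[
f(V)=\det\begin{bmatrix}A_U\\ A_V\end{bmatrix}\Big/\det\begin{bmatrix}A_{U'}\\ A_V\end{bmatrix}.
\]
This ratio does not depend on the basis chosen for $V$, because both determinants scale by the same factor.

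I would find $g\in\mathrm{GL}(2r,q)$ that fixes $U$ pointwise, stabilises $U'$, and satisfies $A_{U'}g=MA_{U'}$ with $\det M=a$. Since $g$ preserves complementarity, it permutes $S$. The identities $\det[A_U;A_Vg]=\det g\cdot\det[A_U;A_V]$ and $\det[A_{U'};A_Vg]=\det g\,\det[A_{U'};A_V]/\det M$ give $f(Vg)=a\,f(V)$. Reindexing the sum by $V\mapsto Vg$ then shows that the sum equals $\chi(a)$ times itself. Choosing $a$ with $\chi(a)\neq1$ forces the sum to vanish.

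To build $g$, write $U'=(U\cap U')\oplus Y$. Because $U'\neq U$ we have $Y\neq0$, and $Y\cap U=0$. Extend to a decomposition $V_0=U\oplus Y\oplus Z$. Let $g$ act as the identity on $U$ and $Z$, and on $Y$ as $\diag(a,1,\ldots,1)$ with respect to some basis of $Y$. Then $g$ fixes $U$ pointwise and preserves $U'$, and $g|_{U'}$ has determinant $a$. This completes the argument.

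I expect the main obstacle to be exactly this step: choosing $g$ so that it acts on $U'$ with determinant a non-kernel element of $\chi$. Scaling all of $Y$ by $a$ does not work, because that gives $a^{\dim Y}$, and $\chi(a^{\dim Y})=1$ whenever $k\mid\dim Y$. That is why $g$ scales only a single basis vector of $Y$.
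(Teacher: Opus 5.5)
The paper does not prove this statement. It quotes it from Goldberger and uses it only to identify census matrices, so your argument is an independent, elementary route.

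\textbf{The $\CGW$ part is correct.} Each step checks out:
\begin{itemize}
\item the count of $q^{r^2}$ complements;
\item the basis-independence of $f$;
\item the identity $f(Vg)=a\,f(V)$, which uses $A_Ug=A_U$ and $A_{U'}g=MA_{U'}$;
\item the fact that $g$ permutes $S$;
\item the construction of $g$ scaling a single vector of $Y$.
\end{itemize}
So $WW^{\ast}=q^{r^2}I$ with entries in $\{0\}\cup\langle\zeta_k\rangle$. Compared with certifying the weighing property through eigenvalues and character tables, as the paper's framework does, your averaging trick needs no spectral information. It also works uniformly in $r$, $q$ and $k\mid q-1$. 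What it does not give you is the spectrum or the surrounding algebra, which the paper's approach uses to locate matrices and to study sums of basis matrices.

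\textbf{The error is in your sentence about automorphisms.} Changing the basis of $U$ multiplies row $U$ and column $U$ by the \emph{same} factor $\chi(\det M)$. That is a congruence $W\mapsto DWD$, not a similarity $W\mapsto DWD^{\ast}$. Concretely, for $g\in\mathrm{GL}(2r,q)$ write $A_Ug=M_UA_{Ug}$ and $c_U=\chi(\det M_U)^{-1}$. Then
\[
W[Ug,Vg]=\chi(\det g)\,c_U\,c_V\,W[U,V].
\]
Hence $P_g e_U=\chi(\det g)c_U\,e_{Ug}$ and $Q_g e_U=\overline{c_U}\,e_{Ug}$ lie in $\Mon_N(\langle\zeta_k\rangle)$ and satisfy $P_gWQ_g^{\ast}=W$. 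So the projection of the automorphism group, in the paper's sense of pairs $(P,Q)$, contains $\mathrm{PGL}(2r,q)$. This is exactly what the theorem asserts.

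Your stronger claim, that $\mathrm{PGL}(2r,q)$ lies in the projection of $\{L : LWL^{\ast}=W\}$ as in Proposition~\ref{prop:lifting}, is false in general. A strong automorphism $L$ over $g$ would need $l_U\overline{l_V}=\chi(\det g)c_Uc_V$ for every complementary pair $U,V$. When $k=2$, every $c_U$ is $\pm1$. Multiply these relations around the three pairwise complementary row spaces of $[I\;0]$, $[0\;I]$, $[I\;I]$: the left side is $|l_U|^2|l_V|^2|l_X|^2=1$ and the right side is $\chi(\det g)^3$. This gives $1=\chi(\det g)^3=\chi(\det g)$, so no element of non-square determinant is covered. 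For example, with $r=1$, $q=3$, your matrix is the skew conference matrix of order $4$, and its strong automorphism group does not project onto $S_4\cong\mathrm{PGL}(2,3)$.

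To fix this, replace that sentence by the explicit pair $(P_g,Q_g)$ and drop the appeal to Proposition~\ref{prop:lifting}.
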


The case $(r,k) = (2,2)$ is supported on the lines of
$\mathrm{PG}(3,q)$, and recovers at $q = 2$ the matrices
$\CGW(35,16;k)$, $k \in \{2,4,6,10\}$, which have monomial automorphism group
$A_8 \cong \mathrm{PSL}(4,2)$ on the $35$ lines.  The
first member beyond $q = 2$, a $\We(130,81)$ at $q = 3$, lies outside the
range $n \le 80$ of our classification.

\subsection{Weighing matrices in Hamming schemes}\label{sec:hamming}

Fix a normal $q\times q$ matrix $C$ with entries in $\{0\}\cup\langle\zeta_k\rangle$,
the \emph{base}.  Write $X_0=I_q$ and $X_1=C$, and for $d\geq1$ form
\[
  M_j=\sum_{\substack{\epsilon\in\{0,1\}^d\\|\epsilon|=j}}\ X_{\epsilon_1}\otimes\cdots\otimes X_{\epsilon_d},
  \qquad 0\leq j\leq d,
\]
where $|\epsilon|=\epsilon_1+\cdots+\epsilon_d$, so that the sum runs over the
$\{0,1\}$-strings with $j$ ones.  The $M_j$ commute, and we diagonalise them
simultaneously.  As $C$ is normal, $\C^q$ has a basis of eigenvectors of $C$;
choosing one such eigenvector $v_l$ in each of the $d$ tensor factors, the
products $v_1\otimes\cdots\otimes v_d$ form a basis of $\C^{q^d}$.

\begin{lemma}\label{lem:basis-eigenvalues}
If $Cv_l=\theta_l v_l$ for $l=1,\dots,d$, then $v_1\otimes\cdots\otimes v_d$ is an
eigenvector of $M_j$ with eigenvalue $e_j(\theta_1,\dots,\theta_d)$, the $j$-th
elementary symmetric polynomial in $\theta_1,\dots,\theta_d$.  As these products
range over the basis above, they give the entire spectrum of $M_j$.
\end{lemma}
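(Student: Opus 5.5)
The proof is a direct computation with the mixed-product property of Kronecker products, followed by a dimension count.

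\emph{Action of a single summand.} Fix $\epsilon\in\{0,1\}^d$ and put $v=v_1\otimes\cdots\otimes v_d$. By the mixed-product property,
\[
(X_{\epsilon_1}\otimes\cdots\otimes X_{\epsilon_d})\,v=(X_{\epsilon_1}v_1)\otimes\cdots\otimes(X_{\epsilon_d}v_d).
\]
Since $X_0=I_q$ and $X_1=C$, we have $X_{\epsilon_l}v_l=\theta_l^{\epsilon_l}v_l$. Multilinearity of the tensor product then gives
\[
(X_{\epsilon_1}\otimes\cdots\otimes X_{\epsilon_d})\,v=\Bigl(\prod_{l=1}^d\theta_l^{\epsilon_l}\Bigr)v=\Bigl(\prod_{l:\,\epsilon_l=1}\theta_l\Bigr)v.
\]

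\emph{Summing over $\epsilon$.} Now sum over all $\epsilon$ with $|\epsilon|=j$. The strings $\epsilon$ of weight $j$ are in bijection with the $j$-subsets $S=\{l:\epsilon_l=1\}$ of $\{1,\dots,d\}$. Hence
\[
M_jv=\Bigl(\sum_{|S|=j}\prod_{l\in S}\theta_l\Bigr)v=e_j(\theta_1,\dots,\theta_d)\,v.
\]
This argument also shows directly that the $M_j$ commute: they are simultaneously diagonal in this basis.

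\emph{Entire spectrum.} Because $C$ is normal, we may choose an eigenbasis $u_1,\dots,u_q$ of $\C^q$ for $C$, which can even be taken orthonormal. The $q^d$ products $u_{i_1}\otimes\cdots\otimes u_{i_d}$ form a basis of $\C^{q^d}=(\C^q)^{\otimes d}$, since tensor products of bases are bases. Each product is an eigenvector of $M_j$ by the previous step, so $M_j$ is diagonal in this basis. The diagonal entries $e_j(\theta_{i_1},\dots,\theta_{i_d})$, counted with multiplicity, therefore constitute its full spectrum.

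No step is a real obstacle. The only point needing care is the last one: the spectrum claim needs these products to be a genuine basis, not merely a collection of eigenvectors, and this is where normality (diagonalisability) of $C$ is used.
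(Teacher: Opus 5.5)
Your proof is correct and follows the paper's argument: each Kronecker summand indexed by $\epsilon$ scales $v_1\otimes\cdots\otimes v_d$ by $\prod_{l:\,\epsilon_l=1}\theta_l$, and summing over the strings with $|\epsilon|=j$ gives $e_j(\theta_1,\dots,\theta_d)$. For the spectrum claim, you spell out the tensor-product eigenbasis argument (using that $C$ is diagonalisable because it is normal), which the paper leaves implicit in the sentence just before the lemma.
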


\begin{proof}
The summand of $M_j$ indexed by $\epsilon$ scales $v_1\otimes\cdots\otimes v_d$ by
$\prod_{l:\,\epsilon_l=1}\theta_l$; summing over the strings with $|\epsilon|=j$
gives the eigenvalue $\sum_{|\epsilon|=j}\prod_{l:\,\epsilon_l=1}\theta_l
=e_j(\theta_1,\dots,\theta_d)$.
\end{proof}

Taking $C=J_q-I_q$ recovers the classical \emph{Hamming scheme} $H(d,q)$, with
vertex set labelled by $d$-tuples drawn from a set of size $q$ and relations $D_0,\dots,D_d$ in which $(x,y)\in D_j$ when $x$
and $y$ differ in exactly $j$ coordinates; $D_j$ has valency
$v_j=\binom{d}{j}(q-1)^j$, and the automorphism group is $S_q\wr S_d$ in the
product action.  Here $M_j$ is the adjacency matrix $A_j$, and the $A_j$ span the
metric Bose--Mesner algebra, generated by $A_1$.  The base $J_q-I_q$ has
eigenvalues $q-1$, once, and $-1$, with multiplicity $q-1$; by
Lemma~\ref{lem:basis-eigenvalues} the eigenvalue of $A_j$ on the common
eigenspace $V_i$ where $i$ of the coordinates carry $-1$ is the elementary
symmetric polynomial in $d-i$ copies of $q-1$ and $i$ copies of $-1$, that is the
Krawtchouk polynomial
\[
  K_j(i)=\sum_{\ell=0}^{j}(-1)^{\ell}(q-1)^{j-\ell}\binom{i}{\ell}\binom{d-i}{j-\ell},
  \qquad K_j(0)=v_j,
\]
see~\cite{BCN}.

The matrices of the census arise on replacing $J_q-I_q$ by a root-of-unity base.
A linear character $\chi$ of $\widehat H$ -- the preimage in the Schur cover
$\widehat G$ of $S_q\wr S_d$ of a point stabiliser -- determines the centraliser
algebra $\Cr(\rho_\chi)$ (Section~\ref{sec:prelim}): in each coordinate $J_q-I_q$
is replaced by the basis matrix $C$ on $K_q$, with root-of-unity entries fixed by
$\chi$, and $A_j$ by the basis matrix $M_j$ on $D_j$ (the Schur cover of a wreath
product is known from the homology of wreath products~\cite{Karpilovsky}).
Lemma~\ref{lem:basis-eigenvalues} applies verbatim to the eigenvalues of $C$.

The base $C$ has zero diagonal and root-of-unity entries off it, so
$\operatorname{tr}C=0$ and $\operatorname{tr}(CC^*)=q(q-1)$.  The action on the
$q$ points has rank~$2$, so $\Cr(\rho_\chi)$ is spanned by $I_q$ and $C$
(Theorem~\ref{thm:basis}); thus $C$ has exactly two distinct eigenvalues
$\sigma,\tau$, of multiplicities $m,m'$ satisfying $m\sigma+m'\tau=0$ and
$m|\sigma|^2+m'|\tau|^2=q(q-1)$.  We call $C$ a \emph{conference matrix} when
$CC^*=(q-1)I$; this holds if and only if $\tau=-\sigma$, equivalently $m=m'$, and
then $q$ is even and $|\sigma|^2=|\tau|^2=q-1$.  In that case $C^2=\sigma^2I$, and
since the diagonal entry $\sigma^2=\sum_l C_{il}C_{li}$ is a sum of $q-1$
unit-modulus terms of modulus $q-1$, these coincide: $C^2=(q-1)\omega I$ for a
root of unity $\omega$.

A combination of the basis matrices $M_j$ is a weighing matrix exactly when all
of its eigenvalues have a single absolute value; for a single basis matrix this
asks that one column of the character table be of constant absolute value.  Call
a weighing matrix \emph{flag-transitive} if its strong automorphism group is
transitive on the positions of its nonzero entries.  Since $S_q\wr S_d$ acts
transitively on the support of each $M_j$, a matrix is flag-transitive precisely
when it is a single basis matrix.

\begin{theorem}\label{thm:hamming-flag}
Let $d\geq2$, let $\chi$ be a linear character of $\widehat H$, and let $M_j$
($1\leq j\leq d$) be a basis matrix of $\Cr(\rho_\chi)$.  Then $M_j$ is a weighing
matrix if and only if $j=d$ and the base $C$ is a conference matrix; this forces
$q$ even, and then $M_d=C^{\otimes d}\in\CGW(q^d,(q-1)^d;k)$.  In particular, for
$q$ odd the Hamming scheme $H(d,q)$ carries no flag-transitive weighing matrix.
\end{theorem}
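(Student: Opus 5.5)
We reduce the weighing condition for a single basis matrix $M_j$ to a statement about elementary symmetric polynomials in the two eigenvalues $\sigma,\tau$ of the base $C$, via Lemma~\ref{lem:basis-eigenvalues}. Since $M_j$ is a basis matrix, its entries have modulus one on the support $D_j$ and vanish elsewhere. So $M_jM_j^*$ is scalar exactly when all eigenvalues of the normal matrix $M_j$ have a common modulus, necessarily $\sqrt{v_j}$. By Lemma~\ref{lem:basis-eigenvalues} the eigenvalues of $M_j$ are
\[
E_j(i)=e_j(\underbrace{\sigma,\dots,\sigma}_{d-i},\underbrace{\tau,\dots,\tau}_{i})=\sum_{\ell=0}^{j}\binom{i}{\ell}\binom{d-i}{j-\ell}\sigma^{j-\ell}\tau^{\ell},\qquad 0\le i\le d,
\]
and every $i\in\{0,\dots,d\}$ occurs, since $m,m'\ge1$. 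So the task is to decide when $|E_j(i)|$ is independent of $i$.

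The easy direction: if $j=d$ and $C$ is a conference matrix, then $M_d=C^{\otimes d}$, because only $\epsilon=(1,\dots,1)$ contributes. Then $M_dM_d^*=(CC^*)^{\otimes d}=(q-1)^dI$. The entries are products of entries of $C$, so $M_d\in\CGW(q^d,(q-1)^d;k)$, and the preamble already gives that $q$ is even. Conversely, for $j=d$ we have $E_d(i)=\sigma^{d-i}\tau^i$. Comparing $i=0$ and $i=1$ forces $|\sigma|=|\tau|$. Combined with $m\sigma+m'\tau=0$, this gives $m=m'$ and $\tau=-\sigma$, so $C$ is a conference matrix by the equivalence recorded before the theorem.

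The main obstacle is ruling out $1\le j<d$. The first step is to show $|\sigma|\neq|\tau|$ would not help. If $|\sigma|=|\tau|$, then $\tau=-\sigma$ as above, and
\[
E_j(i)=\sigma^j\sum_\ell(-1)^\ell\binom{i}{\ell}\binom{d-i}{j-\ell}=\sigma^jK^{(2)}_j(i),
\]
a binary Krawtchouk value. One checks that binary Krawtchouk values are not of constant modulus in $i$ when $j<d$. For instance, $i=0$ gives $\binom{d}{j}$, while for $i=1$ the value is $\binom{d-1}{j}-\binom{d-1}{j-1}$, whose modulus is strictly smaller when $1\le j\le d-1$.

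In general, I would compare $i=0$ and $i=d$: these give $\binom dj\sigma^j$ and $\binom dj\tau^j$, so constancy again forces $|\sigma|=|\tau|$. Hence every case reduces to the binary Krawtchouk computation, and no $j<d$ works.

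Finally, for $q$ odd the conference condition $m=m'$ is impossible because $m+m'=q$. So no $M_j$ is weighing, and by the remark that flag-transitive weighing matrices are exactly single basis matrices, $H(d,q)$ carries none. I expect the only delicate point to be justifying the strict inequality $\bigl|\binom{d-1}{j}-\binom{d-1}{j-1}\bigr|<\binom dj$, which follows since both binomials are positive for $1\le j\le d-1$.
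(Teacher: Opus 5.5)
Your argument is correct and follows the paper's proof. The extremal eigenvalues $\binom{d}{j}\sigma^j$ and $\binom{d}{j}\tau^j$ on $V_0$ and $V_d$ force $|\sigma|=|\tau|$, hence $\tau=-\sigma$ and $C$ is a conference matrix, and $M_d=C^{\otimes d}$ gives the converse. The only difference is how you exclude $1\le j<d$: you compare with the $V_1$ value $\binom{d-1}{j}-\binom{d-1}{j-1}$, whereas the paper sets $\bigl|\binom{d}{j}\sigma^j\bigr|^2=\binom{d}{j}^2(q-1)^j$ equal to the weight $v_j=\binom{d}{j}(q-1)^j$ (the $\sqrt{v_j}$ you noted but never used) to get $\binom{d}{j}=1$ directly.
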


\begin{proof}
By Lemma~\ref{lem:basis-eigenvalues}, taking all $d$ coordinates equal to an
eigenvector for $\sigma$ gives an eigenvector of $M_j$ with eigenvalue
$\binom{d}{j}\sigma^{j}$, and likewise $\binom{d}{j}\tau^{j}$ for $\tau$.  The
matrix $M_j$ is supported on $D_j$, so has weight $\binom{d}{j}(q-1)^j$; if it is
a weighing matrix, every eigenvalue has squared modulus equal to this weight, so
$\binom{d}{j}^2|\sigma|^{2j}=\binom{d}{j}^2|\tau|^{2j}$ and hence
$|\sigma|^2=|\tau|^2$.  With $m\sigma+m'\tau=0$ this forces $\tau=-\sigma$, so $C$
is a conference matrix, $q$ is even, and $|\sigma|^2=q-1$.  The weight condition
then reads $\binom{d}{j}^2(q-1)^{j}=\binom{d}{j}(q-1)^{j}$, giving $\binom{d}{j}=1$
and $j=d$.  Now $M_d=C^{\otimes d}$ satisfies
$M_dM_d^{*}=(CC^{*})^{\otimes d}=(q-1)^dI$, so $M_d\in\CGW(q^d,(q-1)^d;k)$;
conversely $C^{\otimes d}$ is such a matrix whenever $C$ is a conference matrix.
For $q$ odd no conference base exists, so $H(d,q)$ carries no flag-transitive 
weighing matrix.
\end{proof}

These Kronecker powers $M_d=C^{\otimes d}$ of conference matrices are a classical construction~\cite{GeramitaSeberry1979}. Such weighing matrices with automorphism groups of rank 3 and 4 are contained in the census tables at degrees 16, 36, 64. 

Theorem~\ref{thm:hamming-flag} treats a single basis matrix; the centraliser
algebra also contains weighing matrices that are \emph{sums} of basis matrices.
For two summands and $d=3$ these are classified.

\begin{theorem}\label{thm:two-relation}
Let $d=3$, and let $M$ be a sum of two basis matrices of $\Cr(\rho_\chi)$. Then $M \in \mathrm{CGW}(q^{3},w;k)$
only if the base $C$ is a conference matrix, hence $q$ even; the matrix $M$ is then
$I+S$, with $S=\beta\,C^{\otimes3}$ anti-Hermitian, for some $\beta \in \langle \zeta_{k} \rangle$, lying in
$\CGW\bigl(q^3,\,1+(q-1)^3;\,k\bigr)$, or it is one of two matrices on the binary
cube $H(3,2)$.
\end{theorem}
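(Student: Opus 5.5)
The plan is to proceed exactly as in the proof of Theorem~\ref{thm:hamming-flag}, using the spectrum given by Lemma~\ref{lem:basis-eigenvalues}. Write $M=\alpha M_a+\beta M_b$ with $0\le a<b\le 3$ and $\alpha,\beta\in\langle\zeta_k\rangle$; after a global scalar we may take $\alpha=1$. The supports $D_a,D_b$ are disjoint, so the weight is $w=v_a+v_b$. On the eigenvector obtained by taking one eigenvector of $C$ in each of the three coordinates, with eigenvalues $\theta_1,\theta_2,\theta_3\in\{\sigma,\tau\}$, the eigenvalue of $M$ is $e_a(\theta)+\beta e_b(\theta)$. We only need four of these, indexed by the number $i\in\{0,1,2,3\}$ of coordinates carrying $\tau$. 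Setting $f_i=e_a+\beta e_b$ evaluated at $(\sigma^{3-i},\tau^{i})$, the weighing condition reads $|f_i|^2=w$ for every $i$ that actually occurs; since $m,m'\ge1$, all four values of $i$ occur.

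There are six choices of $(a,b)$. I would dispose of each using the two constraints on the base, namely $m\sigma+m'\tau=0$ and $m|\sigma|^2+m'|\tau|^2=q(q-1)$.

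The case $(a,b)=(0,3)$ gives $f_i=1+\beta\sigma^{3-i}\tau^{i}$. Comparing $i=0$ with $i=3$ forces the two products to lie on a common circle through $-\bar\beta$. Comparing all four values, I expect this to force $|\sigma|=|\tau|$. Then $\tau=-\sigma$, so $C$ is a conference matrix. The relation $|1+\beta\sigma^3|=|1-\beta\sigma^3|$ forces $\beta\sigma^3$ to be purely imaginary, which says exactly that $S=\beta C^{\otimes3}$ is anti-Hermitian, using $C^2=(q-1)\omega I$. It follows that $(I+S)(I+S)^*=I+SS^*=(1+(q-1)^3)I$.

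For the other pairs $(0,1)$, $(0,2)$, $(1,2)$, $(1,3)$ and $(2,3)$, the $f_i$ are low-degree polynomials in $\sigma,\tau$ with binomial multiplicities. The strategy is:
\begin{enumerate}
\item use $i=0$ and $i=3$ to force $|\sigma|=|\tau|$, as in Theorem~\ref{thm:hamming-flag}, hence $\tau=-\sigma$;
\item substitute $\tau=-\sigma$, so that the mixed eigenvalues simplify; for example $e_1(\sigma,\sigma,-\sigma)=\sigma$ and $e_2(\sigma,\sigma,-\sigma)=-\sigma^2$;
\item compare $|f_i|^2$ with $w=v_a+v_b$, using $|\sigma|^2=q-1$.
\end{enumerate}
Step 3 should give a polynomial identity in $q$ that fails except at $q=2$. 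Those $q=2$ solutions would be the two exceptional matrices on the binary cube $H(3,2)$, which I would identify by direct computation, or by citing the census.

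The main obstacle is step 1 for the mixed pairs. When $a\ne0$, equal moduli at $i=0$ and $i=3$ only give $|\sigma|^{a}|\binom3a+\beta\binom3b\sigma^{b-a}|=|\tau|^{a}|\binom3a+\beta\binom3b\tau^{b-a}|$, which does not immediately give $|\sigma|=|\tau|$. My first attempt would be to combine all four equations with $m\sigma=-m'\tau$, which says $\tau=-(m/m')\sigma$ is a negative real multiple of $\sigma$. Writing $\tau=-t\sigma$ with $t>0$ reduces everything to real equations in $t$, $|\sigma|$ and $\arg(\beta\sigma^{b-a})$. Showing that $t=1$ is then a finite elimination, or a resultant computation, in those variables. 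If this becomes messy, I would fall back on a parity or trace argument, using $\operatorname{tr}M=\alpha q^3[a=0]$ together with the multiplicities $\binom3i m^{3-i}m'^{i}$.
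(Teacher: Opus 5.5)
Your set-up and your treatment of $(a,b)=(0,3)$ are sound. To close that case: the points $\sigma^{3-i}\tau^{i}=\sigma^{3}s^{i}$, with $s=\tau/\sigma$, lie on a line through $0$ and on a circle centred at $-\bar\beta$ (not ``through'' it). So $1,s,s^2,s^3$ take at most two values, which forces $s=-1$.

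\textbf{Step 1 for the other five pairs.} Comparing $i=0$ with $i=3$ does not give $|\sigma|=|\tau|$, even when $a=0$. For $(0,1)$ it only says $2\,\mathrm{Re}(3\beta\sigma)=-(1+s)\,|3\beta\sigma|^{2}$: the summand $M_0$ breaks the homogeneity that drove Theorem~\ref{thm:hamming-flag}. Your fallback also fails. Eliminating in the real variables $t>0$, $|\sigma|$, $\arg(\beta\sigma^{b-a})$ cannot yield $t=1$, because those equations have other solutions. For $(a,b)=(1,2)$, take $t=2-\sqrt3$ and $\beta\sigma=-(1+\sqrt3)/2$; then $f_i/\sigma=\pm\tfrac32(\sqrt3-1)$ for all $i$. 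The missing input is that $t=m/m'$ is \emph{rational}.

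The paper's mechanism runs as follows. With $s$ real, $|f_i|^2=|\sigma|^{2a}\bigl(Q_a(i)^2+2pQ_a(i)Q_b(i)+gQ_b(i)^2\bigr)$, where $p=\mathrm{Re}(\beta\sigma^{b-a})$ and $g=|\sigma|^{2(b-a)}$. This is linear in $p$, $g$ and the common value, so for $1\le a<b$ a $4\times4$ determinant in $s$ must vanish. Its factors are $(s-1)^6(s+1)(s^2+4s+1)$, $s(s-1)^6(s+1)^2(s^2+5s+1)$ and $s^3(s-1)^6(s+1)^3$. The quadratic factors have negative \emph{irrational} roots, which is exactly where the example above lives, so rationality leaves only $s=-1$. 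For $a=0$, the non-constant real quadratic $1+2pX+gX^2$ takes each value at most twice. Hence the four reals $Q_b(i)$ occupy at most two values, which happens only at $s=-1$, and never for $b=1$.

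\textbf{Step 3 is empty as stated.} Once the $|f_i|$ agree, their common value is automatically $w$, since $\sum_i \dim V_i\,|f_i|^2=\operatorname{tr}(MM^{*})=q^{3}(v_a+v_b)$. So comparing with $v_a+v_b$ produces no equation in $q$, and the restriction to $q=2$ must come from elsewhere.
\begin{itemize}
\item For $(0,2)$, flatness at $s=-1$ gives $\mathrm{Re}(\beta\sigma^2)=-(q-1)^2$. You must then use $|\beta|=1$, i.e.\ $|\mathrm{Re}(\beta\sigma^2)|\le|\sigma|^2=q-1$, to get $q=2$.
\item For $(1,3)$, flatness gives only $\mathrm{Re}(\beta\sigma^{2})=-1$, which is compatible with $|\sigma^2|=q-1$ for every $q$. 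Excluding $q\ge4$ needs two arithmetic facts. First, $\sigma^{2}=(q-1)\omega$ with $\omega$ a root of unity, because the diagonal of $C^{2}=\sigma^{2}I$ is a sum of $q-1$ unit terms of modulus $q-1$. Second, a root of unity with rational real part has real part in $\{0,\pm\tfrac12,\pm1\}$. Together these give $q-1\in\{1,2\}$, and evenness of $q$ gives $q=2$.
\end{itemize}
Neither rationality of $s$ nor these arithmetic facts appear in your plan. Without them, neither the reduction to $s=-1$ nor the restriction to $q=2$ is established.
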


\begin{proof}
Scaling by a root of unity, take $M = M_a+\beta M_b$
with $0\le a<b\le3$.  Let $\sigma,\tau$ be the
eigenvalues of $C$ and $s=\tau/\sigma$; the relation $m\sigma+m'\tau=0$ makes
$s=-m/m'$ a negative rational.  By Lemma~\ref{lem:basis-eigenvalues} the common
eigenspaces are $V_0,\dots,V_3$, where $V_i$ is spanned by the tensors with
exactly $i$ factors an eigenvector for $\tau$; on $V_i$ the basis matrix $M_c$
($0\le c\le3$) acts as the scalar $\sigma^{c}Q_c(i)$, where $Q_c(i)$ is $e_c$
evaluated at $i$ copies of $s$ and $3-i$ copies of $1$ (the factor $\sigma^{c}$ is
pulled out by the homogeneity of $e_c$).  The values $Q_c(i)$ for $i=0,1,2,3$ are
\[
  Q_1:\ 3,\ s{+}2,\ 2s{+}1,\ 3s; \qquad
  Q_2:\ 3,\ 2s{+}1,\ s^2{+}2s,\ 3s^2; \qquad
  Q_3:\ 1,\ s,\ s^2,\ s^3.
\]
The eigenvalues of $M_{0}$ are all equal to $1$, so by abuse of notation we let $Q_{0}(i) = 1$ for all $i$. Put $p=\operatorname{Re}(\beta\sigma^{b-a})$ and $g=|\sigma|^{2(b-a)}$, so that
$p^2\le g$ (as $|\beta|=1$).  The eigenvalue of $M_a+\beta M_b$ on $V_i$ is
$\sigma^{a}Q_a(i)+\beta\sigma^{b}Q_b(i)$, of squared modulus
$|\sigma|^{2a}\bigl(Q_a(i)^2+2p\,Q_a(i)Q_b(i)+g\,Q_b(i)^2\bigr)$; hence the sum is
a weighing matrix precisely when
\begin{equation}\label{eq:wm-flat}
  Q_a(i)^2+2p\,Q_a(i)Q_b(i)+g\,Q_b(i)^2\ \text{ is independent of } i.
\end{equation}

First we show that if $M$ is a weighing matrix, we must have $s=-1$, so $C$ is a conference matrix and
$q$ is even.  If $a=0$ then 
\eqref{eq:wm-flat} requires the
quadratic $1+2pX+gX^2$ to take a single value at $X=Q_b(0),\dots,Q_b(3)$.  As
$g=|\sigma|^{2b}>0$ this quadratic is non-constant, so takes each value at most
twice, and the four numbers $Q_b(i)$ occupy at most two values.  For
$b=1$ they form an arithmetic progression of common difference $s-1\neq0$, hence
are distinct and no weighing matrix arises; for $b=2,3$ they pair up only at
$s=-1$.  If instead $1\le a<b$, regard the four instances $i=0,1,2,3$ of
\eqref{eq:wm-flat} as a linear system in $p$, $g$ and the common value $c$; a
solution exists only if
\[
  \det\bigl(Q_a(i)^2,\ Q_a(i)Q_b(i),\ Q_b(i)^2,\ 1\bigr)_{i=0}^{3}=0
\]
as each instance asserts that the row $\bigl(Q_a(i)^2,\,Q_a(i)Q_b(i),\,Q_b(i)^2,\,1\bigr)$
is orthogonal to the fixed nonzero vector $(1,\,2p,\,g,\,-c)$, so the four rows are
linearly dependent.  From the values above this determinant is a nonzero multiple of
$(s{-}1)^6(s{+}1)(s^2{+}4s{+}1)$, of $s(s{-}1)^6(s{+}1)^2(s^2{+}5s{+}1)$, and of
$s^3(s{-}1)^6(s{+}1)^3$ for $(a,b)=(1,2),(1,3),(2,3)$ respectively.  The factors
$s$ and $s-1$ and the quadratics $s^2+4s+1$, $s^2+5s+1$ (of discriminant $12$
and $21$) have no negative rational zero, so a negative rational $s$ annihilates
only the factor $s+1$; hence $s=-1$.  In every
non-vacuous case $s=-1$, i.e.\ $\tau=-\sigma$, so by the conference
characterisation above $C$ is a conference matrix and $q$ is even.

Finally we classify the solutions when $s=-1$.  Here $|\sigma|^2=q-1$, and the values
$Q_c(i)$ for $i=0,1,2,3$ become
\[
  Q_1:\ 3,1,-1,-3; \qquad Q_2:\ 3,-1,-1,3; \qquad Q_3:\ 1,-1,1,-1.
\]
For $\{M_0,M_3\}$ the values $Q_3(i)=\pm1$ make \eqref{eq:wm-flat} hold iff $p=0$,
with $g$ unconstrained: $\beta\sigma^3$ is imaginary, so $S:=\beta\,C^{\otimes3}$
is anti-Hermitian.  By Theorem~\ref{thm:hamming-flag},
$C^{\otimes3}\in\CGW(q^3,(q-1)^3;k)$, so $SS^*=(q-1)^3I$ and
\[
  (I+S)(I+S)^*=I+(S+S^*)+SS^*=\bigl(1+(q-1)^3\bigr)I
\]
for every even $q$ -- the stated family.  The other two-value pairs survive only
at $q=2$.  For $\{M_0,M_2\}$ the values $Q_2(i)\in\{3,-1\}$ give $p=-g$; with
$p^2\le g$ this forces $g\le1$, while $g=|\sigma|^4=(q-1)^2$, so $q=2$.  For
$\{M_1,M_3\}$ one finds $p=-1$ and $g=(q-1)^2$; here
$\beta\sigma^2/(q-1)=\beta\omega$ is a root of unity (using $\sigma^2=(q-1)\omega$)
with real part $p/(q-1)=-1/(q-1)$, and a root of unity with rational real part has
real part in $\{0,\pm\tfrac12,\pm1\}$, so $q-1\in\{1,2\}$ and, $q$ being even,
$q=2$.  The
remaining pairs are impossible: $\{M_1,M_2\}$ forces $g=-1$, and $\{M_2,M_3\}$
forces $p=0$ and $p=-2$ at once.  Hence for $q>2$ only the family $I+S$ occurs,
and at $q=2$ the further pairs $\{M_0,M_2\}$ and $\{M_1,M_3\}$ give two matrices
on the binary cube $H(3,2)$.
\end{proof}

The census realises this family at $q=4$: the matrix $I+M_3\in\CGW(64,28;6)$ on
$H(3,4)$, of weight $1+(q-1)^3=28$, together with the two matrices on the binary
cube $H(3,2)$.

Sums of more basis matrices also occur -- for instance the Butson matrix
$I+M_1+M_2+M_3=(I+C)^{\otimes3}\in\BH(64;6)$ on $H(3,4)$, the sum of all four.

Not every weighing matrix at degree $64$ comes from a Hamming scheme.  Degree
$64=2^6$ also admits primitive affine groups, with elementary abelian translation
group $C_2^6$, whose centraliser algebras contribute many weighing matrices of
their own.  The $\CGW(64,36;6)$ in the census, for instance, arises from such a
group, with sub-orbits of lengths $1,18,18,27$, not the Hamming valencies
$1,9,27,27$.

\subsection{Further examples}\label{sec:further}

Excluding the 2-transitive families of Butson matrices, infinite families containing the matrices in our census are unknown to us. In this section we describe a number of matrices of particular interest. The matrices $\CGW(36,25;k)$,
$k \in \{2,4,6,10\}$, admit the rank-$3$ group $(A_5^2){:}D_4$; the real
class is the Kronecker square $C_6 \otimes C_6$ of the order-$6$ conference
matrix, while the classes at $k \in \{4,6,10\}$ are not real, and so not
scalar multiples of it.  The matrices $\CGW(16,5;k)$ admit the
automorphism group of the Clebsch graph and exist for the six phases
$k \in \{2,4,6,10,20,30\}$.

The simple group $\mathrm{PSL}(3,4)$, in its rank-$3$
action on $56$ points, yields a monomial cover of the Gewirtz graph
$\mathrm{SRG}(56,10,0,2)$: a character of order $4$ of its $48$-fold
Schur cover produces a $\CGW(56,10;4)$, together with
a real $\We(56,45)$ and a $\CGW(56,46;4)$. The two rank-$3$ actions of
$\mathrm{O}_5(3)$ on $40$ points yield the weighing matrices
$\We(40,27)$ and $\We(40,28)$, and the product action of $A_5 \wr S_2$
on $60$ points yields $\We(60,25)$ and $\We(60,36)$; in each of the
last two degrees the census had previously contained no primitive
example. Finally, we record a construction of a weighing matrix based on 
an action of the Hall-Janko group $J_2$. 

\begin{proposition}\label{prop:hj}
Let $\Gamma$ be the Hall--Janko graph $\mathrm{SRG}(100,36,14,12)$.  There
is a signing $S$ of the adjacency matrix of $\Gamma$ with $S^{\top} = -S$
and $SS^{\top} = 36 I_{100}$.  In particular $S \in \We(100,36)$ and
$I_{100} + S \in \We(100,37)$, and both matrices admit the double cover
$2{\cdot}J_2$ as a group of strong automorphisms covering the rank-$3$
action of $J_2$.
\end{proposition}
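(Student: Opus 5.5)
The plan is to build $S$ inside the centraliser algebra of a monomial representation of $2{\cdot}J_2$, following Algorithm~\ref{alg:search} with $r=3$. Let $G=J_2$ act with rank $3$ on the $100$ cosets of $H=U_3(3)$; the orbitals are the diagonal, the edges of $\Gamma$ (valency $36$) and the non-edges (valency $63$). Let $\widehat{G}=2{\cdot}J_2$, which is a Schur cover since $J_2$ has Schur multiplier of order $2$. The preimage $\widehat{H}$ of $H$ is then an extension $2\times U_3(3)$ or $2{\cdot}U_3(3)$. Since $U_3(3)$ has trivial multiplier, it is the former. It therefore has a linear character $\lambda$ that is nontrivial on the central involution $z$, namely the sign on the central $2$ and trivial on $U_3(3)$. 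The induced representation $\rho_\lambda$ is monomial with entries $\pm1$, so every basis matrix of $\Cr(\rho_\lambda)$ is a signed $(0,1)$-matrix.

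Next I determine which orbitals are orientable, using the criterion $\lambda(tht^{-1}h^{-1})=1$ for $h\in\widehat{H}\cap t^{-1}\widehat{H}t$ from \cite[Prop.~3.6]{Monrep}. Equivalently, an orbital survives precisely when the two-point stabiliser in $J_2$ lifts so that the element swapping or fixing the pair does not pick up $z$ in the wrong place. I expect the edge orbital to be orientable and the non-edge orbital not. The latter should be witnessed by an element of $H\cap H^t$ whose commutator lifts to $z$. Then $\Cr(\rho_\lambda)$ is spanned by $I$ and a signed adjacency matrix $S$ of $\Gamma$, and this is a computation in $2{\cdot}J_2$ that can be checked in \textsc{Magma}/\textsc{GAP} via \textsc{AtlasRep}. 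Since $\Gamma$ is undirected, the orbital is self-paired. Hence $S^\top$ also lies in the algebra and is supported on the same orbital, so $S^\top=\pm S$. The sign is $-1$ exactly when the element interchanging an adjacent pair acts with the factor $z$. I will verify $S^\top=-S$ either directly or spectrally: a symmetric $S$ would need real eigenvalues $\pm6$, which is inconsistent with trace $0$ and the multiplicity constraints in the argument below.

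For the identity $SS^\top=36I$, note that $\Cr(\rho_\lambda)$ is $2$-dimensional and closed under multiplication and transpose. Therefore $SS^\top=aI+bS$. Its diagonal gives $a=36$. Since $SS^\top$ is symmetric while $S$ is skew, $b=0$, so $SS^\top=36I$. This argument needs only skew-symmetry and the dimension count, and it avoids Proposition~\ref{prop:CT} entirely. As a consistency check, Proposition~\ref{prop:lam-leung-cgw} requires a vanishing sum of $\mu=12$ signs, which is fine. For $I+S$, skewness gives $(I+S)(I+S)^\top=I+S+S^\top+SS^\top=37I$. Both matrices commute with $\rho_\lambda(\widehat{G})$, a signed permutation group, so $2{\cdot}J_2$ acts by strong automorphisms covering the rank-$3$ action of $J_2$.

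The main obstacle is the orientability and sign computation in $2{\cdot}J_2$: showing that the non-edge orbital dies and the edge orbital gives a skew rather than symmetric matrix. The cleanest route I would try first is to compute the lifted order of an involution of $J_2$ swapping two adjacent vertices. If every such swapping element has lift of order $4$, so that its square is $z$ and $\lambda$ sees it, then skewness follows. If the theoretical verification proves awkward, I would fall back on the explicit computer construction of $\rho_\lambda$ and a direct check of $S^\top=-S$ and $SS^\top=36I$.
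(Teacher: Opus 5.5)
Your plan is the paper's approach. The paper does not give a written proof of this proposition: it reports it as an output of Algorithm~\ref{alg:search} run on $2{\cdot}J_2$. That is, $S$ is found in $\Cr(\rho_\lambda)$ for the linear character $\lambda$ of $\widehat H\cong 2\times U_3(3)$ that is nontrivial on $z$.

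What you add is a derivation of the weighing property that avoids the eigenvalue check via Proposition~\ref{prop:CT}. Once $\dim\Cr(\rho_\lambda)=2$ and $S^\top=-S$ are known, you write $SS^\top=aI+bS$, get $a=36$ from the diagonal, and get $b=0$ by symmetry. This is correct and cleaner. Both inputs are genuinely used: if the non-edge orbital also survived, $SS^\top$ could have a component on its basis matrix.

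Part of the orientability check can be done by hand. For the edge orbital, $H\cap H^t\cong L_3(2)$ is perfect. Hence $2\times L_3(2)$ has only one linear character taking the value $-1$ on $z$, which forces $\lambda=\lambda^t$ there. For the non-edge orbital, the two-point stabiliser has order $96$ and is not perfect, so that case really needs the computation. It does come out as you expect: $\langle\rho_\lambda,\rho_\lambda\rangle=2$.

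The one step to discard is the spectral check of $S^\top=-S$. First, it is circular: the eigenvalues $\pm 6$ presuppose $SS^\top=36I$, and you derive that identity from skewness. Second, it would not produce a contradiction anyway. A symmetric signing with eigenvalues $6$ and $-6$, each of multiplicity $50$, has trace $0$, and $\rho_\lambda$ does in fact split into two constituents of degree $50$.

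Symmetric versus skew is decided exactly by your swapping criterion: if $g$ interchanges adjacent vertices $a,b$, then $S_{ba}=\lambda(g^2)S_{ab}$. That route goes through, in two steps.
\begin{enumerate}
\item The outer involutions of the edge stabiliser $L_3(2){:}2$ lie in class $2B$ of $J_2$. A $2A$-involution fixes $20$ vertices, which span $120$ edges, and it has only $40$ transpositions. If the $49$ involutions of $L_3(2){:}2$ were all in $2A$, such an involution would fix $1920\cdot 49/336=280$ edges, more than $120+40$.
\item The preimage of $L_3(2){:}2$ in $2{\cdot}J_2$ is $L_3(2){:}4$ (with $4$ acting through the outer automorphism), not $2\times L_3(2){:}2$. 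The faithful $6$-dimensional character of $2{\cdot}J_2$ is quaternionic and stays irreducible on this preimage, whereas the corresponding $6$-dimensional character of $2\times L_3(2){:}2$ is real.
\end{enumerate}
So these involutions lift to elements with $g^2=z$, giving $\lambda(g^2)=-1$ and hence $S^\top=-S$.
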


We are not aware of a previously published
skew-symmetric $\We(100, 36)$ admitting an action of the Hall--Janko group; 
it is not among the structures associated with that graph in Brouwer's
catalogue~\cite{BrouwerHJ}.

Beyond degree $100$ the partial search yields further matrices with
sporadic and classical automorphism groups, collected in
Table~\ref{tab:aut_gt80}.  Among these are weighing and Butson Hadamard
matrices invariant under the Mathieu group $M_{12}$, at degrees $144$,
$220$ and $495$, and under the Held group $\mathrm{He}$, at degree
$2058$; and matrices invariant under $\mathrm{PSL}(4,3)$,
$\mathrm{PSL}(3,4)$ and $\mathrm{PSL}(4,5)$ at degrees up to $806$, the
last of which realises further members of the Grassmannian family of
Theorem~\ref{thm:goldberger} ($\We(156,125)$ at $q=5$, $d=4$, $k=1$, and
$\We(806,625)$ at $q=5$, $d=4$, $k=2$).

\section{Quantum error-correcting codes}\label{sec:quantum}

\subsection{Construction from CGW matrices}

We recall the standard construction of quantum stabiliser codes from Hermitian self-orthogonal codes, in the form applied to CGW matrices in~\cite{CGW-Survey}.
Let $q$ be a prime power with $k = q+1$, and let $\alpha \in \GF(q^2)^{\times}$ be an element
of multiplicative order $k$ (which exists since $k \mid q^2-1$). Define a map
$\varphi \colon \{0\} \cup \langle \zeta_k \rangle \to \GF(q^2)$ by $\varphi(0) = 0$ and
$\varphi(\zeta_k^j) = \alpha^j$. For $W \in \CGW(n,w;k)$, let
$C = \mathrm{rowspace}(\varphi(W))$ denote the $\GF(q^2)$-linear code generated by the rows of
the image matrix.

\begin{theorem}[{\cite[Section~4]{CGW-Survey}}]\label{thm:egan-qecc}
Let $q$ be a prime power, $k = q+1$, and $W \in \CGW(n,w;k)$ with
$\mathrm{char}\,\GF(q) \mid w$. Then $C = \mathrm{rowspace}(\varphi(W))$ is a Hermitian
self-orthogonal $[n,r]_{q^2}$ code, where $r = \dim C$, and yields a quantum stabiliser code
with parameters $[[n, n-2r, \geq d]]_q$, where $d$ is the minimum distance of the Hermitian dual
$C^{\perp_H}$. 
\end{theorem}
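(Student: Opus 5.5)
The argument has three steps: Hermitian self-orthogonality of $C$ from the weighing identity $WW^{\ast}=wI_n$ and the hypothesis $\mathrm{char}\,\GF(q)\mid w$, then a standard quantum construction, then identification of the distance bound.

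\textbf{Step 1: reduce to the Gram matrix.} Let $\bar{x}=x^{q}$ be the Frobenius involution of $\GF(q^2)$. The Hermitian inner product is $\langle u,v\rangle_H=\sum_\ell u_\ell v_\ell^{q}$. Since $C$ is spanned by the rows of $\varphi(W)$ and the form is sesquilinear, it suffices to show $\langle \varphi(W)_i,\varphi(W)_j\rangle_H=0$ for all rows $i,j$.

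\textbf{Step 2: compatibility of $\varphi$ with the field structure.} The order $k=q+1$ of $\alpha$ gives $\alpha^{q}=\alpha^{-1}$. Hence $\varphi(\zeta_k^{j})^{q}=\alpha^{-j}=\varphi(\overline{\zeta_k^{j}})$, so $\varphi$ intertwines complex conjugation with Frobenius; it trivially preserves $0$. Moreover $\varphi$ is multiplicative on $\{0\}\cup\langle\zeta_k\rangle$.

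\textbf{Step 3: transport the vanishing sums.} The remaining point is additive. Write $\mathbb{Z}[\zeta_k]\to\GF(q^2)$ for the ring homomorphism with $\zeta_k\mapsto\alpha$. It exists because $\alpha$ is a root of $x^k-1$ of order exactly $k$, hence a root of the cyclotomic polynomial $\Phi_k$ modulo $p$; its kernel is a prime above $p$. This homomorphism agrees with $\varphi$ on roots of unity. The $(i,j)$ entry of $WW^{\ast}$ is $\sum_\ell W_{i\ell}\overline{W_{j\ell}}=w\delta_{ij}$, where each nonzero term is a root of unity. Applying the homomorphism gives $\sum_\ell \varphi(W_{i\ell})\varphi(W_{j\ell})^{q}=w\delta_{ij}$ in $\GF(q^2)$. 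Since $\mathrm{char}\,\GF(q)\mid w$, the right side is $0$, so $\varphi(W)\varphi(W)^{\ast}=0$ with $\ast$ the Frobenius conjugate transpose.

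\textbf{Step 4: the quantum code.} From Step 3, $C\subseteq C^{\perp_H}$, with $\dim C=r$. The Hermitian construction of Ashikhmin--Knill / Calderbank--Rains--Shor--Sloane (Ketkar et al.) turns a Hermitian self-orthogonal $[n,r]_{q^2}$ code into an $[[n,n-2r,d]]_q$ stabiliser code. Here $d$ is the minimum weight of $C^{\perp_H}\setminus C$, which is at least the minimum distance of $C^{\perp_H}$; this gives the stated $\ge d$.

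The only step needing care is Step 3, namely that the reduction is a genuine ring map, so that vanishing sums over $\mathbb{Z}$ survive. Everything else is bookkeeping.
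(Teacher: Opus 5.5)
Your proposal is correct and follows the same route as the paper's brief justification. Self-orthogonality of each row comes from $x^{q+1}=1$ together with $\mathrm{char}\,\GF(q)\mid w$, orthogonality of distinct rows comes from the off-diagonal vanishing in $WW^{\ast}=wI_n$ with $x^{q}=x^{-1}$, and the standard Hermitian stabiliser construction of Ketkar et al.\ finishes the argument. Your Step 3 makes explicit what the paper leaves implicit: $\zeta_k\mapsto\alpha$ extends to a ring homomorphism $\mathbb{Z}[\zeta_k]\to\GF(q^2)$ because $\alpha$ is a root of $\Phi_k$ modulo $p$, so vanishing sums of $k$-th roots of unity genuinely survive the reduction.
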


\begin{remark}
The minimum distance parameter of the quantum code constructed by the method of Theorem \ref{thm:egan-qecc} is the minimum weight of a codeword in $C^{\perp_{H}} \setminus C$ (except in the case of quantum $[[n,0,d]]$ codes where conventionally the minimum distance parameter is the minimum distance $d$ of $C = C^{\perp_{H}}$) and in some cases it is possible that the true minimum distance of the quantum code is greater than $d$. See \cite[Corollary 19]{Ketkar} and surrounding discussion.  
\end{remark}

The characteristic condition $\mathrm{char}\,\GF(q) \mid w$ ensures that each row of
$\varphi(W)$ is self-orthogonal with respect to the Hermitian inner product
$\langle u, v \rangle_H = \sum_i u_i v_i^q$ over $\GF(q^2)$; orthogonality of distinct rows
follows from the off-diagonal vanishing in $WW^* = wI_n$, since $x^{q} = x^{-1}$ for any $x$ in the image of $\varphi$. The minimum distance satisfies the quantum Singleton bound $d \leq r+1$,
and codes achieving equality are called \emph{quantum MDS}.

\subsection{Computational results}

We apply Theorem~\ref{thm:egan-qecc} to every matrix in our census that satisfies the
characteristic condition. This covers all phases $k \in \{3,4,6,10\}$, corresponding to
$q \in \{2,3,5,9\}$, for which the weight $w$ is divisible by
$\mathrm{char}\,\GF(q)$.  The characteristic condition is restrictive:
for $k = 5$ (so $q = 4$) every matrix in the census has odd weight
($w \in \{5, 25\}$), so no quantum code arises -- the only phase in our
range for which the condition is never satisfied.

Minimum distances are computed exactly via Zimmermann's algorithm (information-set decoding in
\textsc{SageMath}, used for $q \in \{2,3\}$) or via the MacWilliams transform (enumerating all
$|C|=(q^2)^r$ codewords when $|C|\leq 6\times 10^5$). For codes over $\GF(25)$ and $\GF(81)$
direct enumeration is infeasible; here we combine the monomial automorphism group of the CGW
matrix with the Hermitian MacWilliams identity. The group partitions codewords into orbits
(recorded by a BFS over the info-space action), and weight enumeration over orbit representatives
determines the minimum distance exactly.

Two cases require different methods.  The matrix $\CGW(60,4;3)$ is a direct sum of
twelve copies of $\CGW(5,4;3)$, so its code decomposes accordingly, and the dual
distance is that of a single block.  For $\CGW(64,27;10)$ the code $C$ has
$81^8$ codewords, beyond enumeration even by orbits; instead we compute the dual
distance directly.  Since $d(C^{\perp_H})$ equals the minimal number of linearly
dependent columns of the $8 \times 64$ generator matrix of $C$, an exhaustive
search over column subsets of size at most $6$ -- reduced by the monomial
automorphism group, of order $829440$ and transitive on columns -- finds
dependent triples and no dependent pairs, so $d = 3$ exactly.

Table~\ref{tab:qcodes} summarises all quantum codes obtained; every minimum
distance is exact.

\begin{table}[H]
\centering
\small
\caption{Quantum codes constructed from the census.  Each entry gives the
source $\CGW(n,w;k)$, the dimension $r$ of the Hermitian self-orthogonal
code, and the resulting $[[n,K,d]]_q$ code.  All distances are exact;
$*$ marks quantum MDS codes.}
\label{tab:qcodes}
\renewcommand{\arraystretch}{1.15}
\begin{tabular}{lcl@{\qquad}lcl}
\hline
$(n,w;k)$ & $r$ & $[[n,K,d]]_q$ & $(n,w;k)$ & $r$ & $[[n,K,d]]_q$ \\
\hline
$(5,4;3)$   & $2$ & $[[5,1,3]]_2^{*}$   & $(36,36;3)$  & $9$  & $[[36,18,4]]_2$ \\
$(10,4;3)$  & $4$ & $[[10,2,3]]_2$      & $(16,5;6)$   & $8$  & $[[16,0,5]]_5$ \\
$(10,9;4)$  & $3$ & $[[10,4,4]]_3^{*}$  & $(36,25;6)$  & $9$  & $[[36,18,4]]_5$ \\
$(10,9;4)$  & $4$ & $[[10,2,4]]_3$      & $(16,6;4)$   & $8$  & $[[16,0,6]]_3$ \\
$(13,9;4)$  & $3$ & $[[13,7,3]]_3$      & $(16,9;10)$  & $4$  & $[[16,8,3]]_9$ \\
$(13,9;4)$  & $6$ & $[[13,1,4]]_3$      & $(36,36;4)$  & $9$  & $[[36,18,4]]_3$ \\
$(16,9;4)$  & $4$ & $[[16,8,3]]_3$      & $(60,4;3)$   & $24$ & $[[60,12,3]]_2$ \\
$(17,16;3)$ & $4$ & $[[17,9,4]]_2$      & $(64,27;10)$ & $8$  & $[[64,48,3]]_9$ \\
\hline
\end{tabular}
\end{table}

Several entries deserve comment.  The code $[[5,1,3]]_2$ arising from
$\CGW(5,4;3)$ is the celebrated five-qubit perfect quantum
error-correcting code~\cite{CGW-Survey}, the smallest correcting all
single-qubit errors; it is quantum MDS ($d = r+1$), as is $[[10,4,4]]_3$
from $\CGW(10,9;4)$, for which we are not aware of a prior construction.
The $\GF(25)$ codes $[[16,0,5]]_5$ and $[[36,18,4]]_5$, by contrast, fall
short of the best known codes with the same parameters ($[[16,0,8]]_5$ and
$[[36,18,7]]_5$ in~\cite{Grassl:codetables}).  Outside the census proper,
applying Theorem~\ref{thm:egan-qecc} to
$W_5^{\otimes 2} \in \CGW(25,16;3)$, where
$W_5 = \mathrm{circ}(0,1,\zeta_3,\zeta_3,1) \in \CGW(5,4;3)$, gives a
Hermitian self-orthogonal $[25,4]_4$ code and a quantum code
$[[25,17,3]]_2$, its minimum distance computed exactly via the MacWilliams
transform; these parameters match the best known for this length and
dimension~\cite{Grassl:codetables}, there achieved by shortening a stored
$[[40,33,3]]$ code over $\mathrm{GF}(4)$ at sixteen coordinates and
extending by one, whereas the construction from $W_5^{\otimes 2}$ is direct
and algebraic.

The distances of the codes arising from the families $\CGW(4^d,3^d;k)$ of
Theorem~\ref{thm:hamming-flag} are determined for all $d$ by the following
observation.

\begin{proposition}\label{prop:family-distance}
For $d \geq 2$ and $k \in \{4, 10\}$, the quantum codes obtained from
$\CGW(4^d, 3^d; k)$ via Theorem~\ref{thm:egan-qecc} have parameters
$[[\,4^d,\; 4^d - 2^{d+1},\; 3\,]]_{k-1}$.  In particular they are not quantum MDS.
\end{proposition}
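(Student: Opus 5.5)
The plan is to reduce everything to the $4\times4$ base and then use standard facts about tensor-product codes. By Theorem~\ref{thm:hamming-flag} the matrix is $W=C^{\otimes d}$, where $C$ is a $4\times4$ conference matrix: its diagonal is zero and its off-diagonal entries lie in $\langle\zeta_k\rangle$, and $CC^*=3I$. Set $q=k-1\in\{3,9\}$, so $\mathrm{char}\,\GF(q)=3$ divides $w=3^d$ and Theorem~\ref{thm:egan-qecc} applies. The map $\varphi$ sends $0\mapsto0$ and is multiplicative on roots of unity. Every entry of $C^{\otimes d}$ is a product of entries of $C$, so $\varphi(C^{\otimes d})=\varphi(C)^{\otimes d}$. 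Writing $A$ for the row space of $\varphi(C)$, it follows that the code attached to $W$ is the tensor-product code $A^{\otimes d}$.

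\emph{Dimension.} Theorem~\ref{thm:egan-qecc} applied to $C\in\CGW(4,3;k)$ shows that $A$ is Hermitian self-orthogonal of length $4$, so $\dim A\le 2$. The rows of $\varphi(C)$ have supports of size $3$ missing different coordinates, so no row is a multiple of another and $\dim A\ge2$. Hence $A$ is a Hermitian self-dual $[4,2]_{q^2}$ code. The rank of a Kronecker power is the power of the rank, so $r=\dim A^{\otimes d}=2^d$ and $K=4^d-2^{d+1}$.

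\emph{Distance.} Conjugation by $x\mapsto x^q$ is coordinatewise, so Hermitian duals of tensor-product codes behave like Euclidean duals. I will use the standard fact that
\[
(A_1\otimes A_2)^{\perp_H}=A_1^{\perp_H}\otimes\F^{n_2}+\F^{n_1}\otimes A_2^{\perp_H},
\]
whose minimum distance is $\min\bigl(d(A_1^{\perp_H}),d(A_2^{\perp_H})\bigr)$. Iterating, $d\bigl((A^{\otimes d})^{\perp_H}\bigr)=d(A^{\perp_H})=d(A)$.
\begin{itemize}
\item For the upper bound, I take $x\in A^{\perp_H}$ of minimal weight and form $x\otimes e_1\otimes\cdots\otimes e_1$ with unit vectors $e_1$; this vector has weight $d(A)$.
\item For the lower bound I argue by induction on $d$. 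Write a dual word as a $4\times4^{d-1}$ array. If some column is nonzero and outside $A^{\perp_H}$, then the rows, viewed in the last $d-1$ factors, give a nonzero word of $(A^{\otimes (d-1)})^{\perp_H}$. This is the usual product-code argument, and it is the step I expect to need the most care; I would model it on the classical proof for Euclidean product codes.
\end{itemize}

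It remains to show $d(A)=3$. The Singleton bound for a $[4,2]$ code gives $d(A)\le3$. Equality holds if and only if no two columns of the $2\times4$ generator matrix are proportional. I would verify this directly on two rows of $\varphi(C)$, using that $\varphi(C)$ has zero diagonal and unit off-diagonal entries. For instance, rows $1$ and $2$ restricted to columns $1,2$ read $(0,a)$ and $(b,0)$ with $ab\neq0$, which rules out proportionality of those two columns; the remaining column pairs are handled in the same way, using the orthogonality of the rows of $C$.

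Finally, every codeword of $A^{\otimes d}$ has weight at least $d(A)^d=3^d>3$ when $d\ge2$. So the weight-$3$ words of the dual lie outside $A^{\otimes d}$, and the quantum distance is exactly $3$. Since $3<2^d+1=r+1$, the codes are not quantum MDS.
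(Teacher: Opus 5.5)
Your proof is correct, but you obtain both distance bounds differently from the paper.

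\textbf{Upper bound.} The paper uses the same lifting $u\mapsto u\otimes e_j$. It seeds it at $d=2$, with the weight-$3$ dual words of the computed codes $[[16,8,3]]_3$ and $[[16,8,3]]_9$ in Table~\ref{tab:qcodes}. You seed it at $d=1$, after proving by hand that $A$ is a Hermitian self-dual $[4,2,3]$ code. This makes your argument independent of the computation.

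\textbf{Lower bound.} The paper uses no product-code theory. The columns of the $4\times4$ matrix $\varphi(C)$ are nonzero with pairwise distinct supports, and Kronecker products of nonzero vectors are proportional only if their factors are. Hence no two columns of $\varphi(C)^{\otimes d}$ are dependent, and the dual has no word of weight at most $2$. You instead use the identity $d\bigl((A_1\otimes A_2)^{\perp_H}\bigr)=\min\bigl(d(A_1^{\perp_H}),d(A_2^{\perp_H})\bigr)$. It gives the exact dual distance of every tensor power directly from the base, but you only sketch it. To finish it:
\begin{itemize}
\item Suppose some column of the array lies outside $A^{\perp_H}$. The word of $(A^{\otimes(d-1)})^{\perp_H}$ you produce is a linear combination of the rows of the array, with coefficients $a_i^q$ for a suitable $a\in A$. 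It is therefore supported inside the set of nonzero columns, so its weight is at most that of $x$.
\item If instead every column lies in $A^{\perp_H}$, any nonzero column already has weight at least $d(A^{\perp_H})$.
\end{itemize}
The paper's distinct-supports observation would also shorten your base case. Applied to the full matrix $\varphi(C)$, which generates $A$, it gives $d(A)=d(A^{\perp_H})\geq 3$ without the orthogonality check on columns $3$ and $4$.

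\textbf{Impurity.} Your remark that $d(A^{\otimes d})=3^d>3$ places the weight-$3$ dual words outside the code. This settles the impurity caveat in the Remark following Theorem~\ref{thm:egan-qecc}, a point the paper's proof leaves implicit.
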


\begin{proof}
The rank of $\varphi(W)$ over $\GF((k-1)^2)$ equals $2$ for $W \in \CGW(4,3;k)$
in both cases, and rank is multiplicative under Kronecker products, so
$r = 2^d$ and $K = 4^d - 2^{d+1}$.
If $u \in C_A^{\perp_H}$ for a factor $A$, then for any standard basis vector
$e_j$ and any row $a \otimes b$ of $\varphi(A \otimes B)$,
\[
\langle a \otimes b,\; u \otimes e_j \rangle_H
  \;=\; b_j^{\,q}\,\langle a, u \rangle_H \;=\; 0,
\]
so $u \otimes e_j \in C_{A \otimes B}^{\perp_H}$ with the same weight.  Taking
$A = W^{\otimes 2}$, whose dual contains words of weight $3$
($[[16,8,3]]_3$ and $[[16,8,3]]_9$ in Table~\ref{tab:qcodes}), gives
$d(C^{\perp_H}) \leq 3$ for all $d \geq 2$.  Conversely, column $j$ of
$\varphi(W)$ vanishes exactly in row $j$, so the columns of $\varphi(W)$ have
pairwise distinct supports; columns of a Kronecker product are proportional
only if all their factors are, so no two columns of $\varphi(W^{\otimes d})$
are proportional and no column is zero.  A dual word of weight at most $2$ is
a vanishing combination of at most two columns of the generator matrix, so
none exists, and $d(C^{\perp_H}) = 3$.
\end{proof}

\section{Conclusion}\label{sec:discussion}

We have classified the complex generalised weighing matrices lying in the centraliser algebra of a primitive monomial representation of rank $\leq 5$ and degree $n \leq 80$, with the single exception of degree $64$.  The classification recovers the known geometric and Butson families and, alongside them, produces matrices at parameters for which we found no construction in the literature.

Of the $126$ extended equivalence classes, $35$ are block-diagonal direct
sums and $26$ belong to previously known families -- Butson matrices previously classified by Moorhouse, and weighing matrices classified by Seberry-Whiteman, Berman and Goldberger. A further $16$ arise from the
constructions of Section~\ref{sec:families}. The Butson
classes $\BH(21;3)$~\cite{LOS2020} and the $C_7 \times C_7$-invariant
$\BH(49;6)$~\cite{Szollosi2012, SchmidtWongXiang2021} are known from prior classifications, but to our knowledge the remaining matrices have not been previously constructed. The census also realises a symmetric $\We(21,9)$ invariant
under $\mathrm{PSL}(3,2){:}C_2$, whose existence was open in the Handbook of
Combinatorial Designs~\cite{Handbook2007} and discovered independently by Rosin, ~\cite{Rosin2025}.  The census likewise resolves two open cases in~\cite{CGW-Survey}: $\CGW(15,7;3)$ (invariant under $A_7$, settling the
$\mathrm{SBIBD}(15,7,3)$ lifting problem) and $\CGW(10,7;6)$ (invariant
under $S_5$).

\subsection{Open problems}

Two problems arising from this work seem to us particularly worthwhile.

\begin{enumerate}
\item \textbf{Sums of basis matrices.}\label{prob:hamming-general}
Theorem~\ref{thm:hamming-flag} classifies the weighing matrices that are a
single basis matrix of the centraliser algebra of a Hamming scheme. The algebra
also contains weighing matrices that are sums of several basis matrices -- for
instance the $\BH(64;6)$ on $H(3,4)$. For what values of $d,q,w,k$ does $H(d,q)$ suppose a $\CGW(q^d, w,k)$?

\item \textbf{Conference matrices at odd orders.}\label{prob:conference-seeds}
The families of Theorem~\ref{thm:hamming-flag} have as a base a $\CGW(q,q-1;k)$; examples are known at $q=5$ and $q=9$. For which other odd orders $q$, and which $k$, does a conference matrix exist?
\end{enumerate}

\section*{Acknowledgements}

Andrea \v Svob was supported by Croatian Science Foundation under the project HRZZ-IP-2022-10-4571 and by European Union-NextGenerationEU, project number uniri-iz-25-46-KonGeoGraGru.

This work began at the First Dublin Discrete Mathematics Workshop, organised by Drs Eimear Byrne, Ronan Egan, Pádraig Ó Catháin \& John Sheekey. Funding from Dublin City University Faculty of Humanities and Social Sciences, and from University College Dublin is gratefully acknowledged.

\section*{Data availability}

The census of matrices underlying the classification, together with the
code used to generate and verify it, is available from the authors on
request.

\bibliographystyle{abbrv}
\flushleft{
\bibliography{Biblio}

}

\clearpage
\appendix
\appendix
\section{Classification tables}\label{app:tables}

Columns: $n$ = order, $w$ = weight, $k$ = phase, $\mathrm{cl}$ = number of
classical equivalence classes, $|\mathrm{Aut}|$ = order of the full monomial
automorphism group, $G$ = structure description of the underlying permutation
group, $\mathrm{f}$ = number of flag-orbits.  Structure descriptions follow
\textsc{Atlas} conventions: $A{\times}B$ denotes a direct product, $A{:}B$ a
split extension, and $A{\cdot}B$ a non-split extension.  Ten pairs of classical
classes merge under extended equivalence (Section~\ref{sec:equiv}); the
extended counts, the permutation group orders, and the ranks are omitted
here and are available with the census data.

\begin{center}\scriptsize\setlength{\tabcolsep}{2.5pt}\renewcommand{\arraystretch}{0.97}
\begin{tabular}{rrrrrlr@{\hspace{3.5em}}rrrrrlr}
\toprule
$n$ & $w$ & $k$ & cl & $|\mathrm{Aut}|$ & $G$ & f & $n$ & $w$ & $k$ & cl & $|\mathrm{Aut}|$ & $G$ & f \\
\midrule
$5$ & $4$ & $3$ & $1$ & $180$ & $A_{5}$ & $1$ & $16$ & $6$ & $20$ & $1$ & $115200$ & $C_2^4{:}A_6$ & $1$  \\
$5$ & $4$ & $6$ & $1$ & $360$ & $A_{5}$ & $1$ & $16$ & $10$ & $4$ & $2$ & $7680$ & $\mathrm{ASL}(2,4){:}C_2$ & $1$  \\
$5$ & $5$ & $5$ & $1$ & $500$ & $\mathrm{AGL}(1,5)$ & $1$ & $16$ & $16$ & $2$ & $1$ & $10321920$ & $\mathrm{AGL}(4,2)$ & $1$  \\
$5$ & $5$ & $10$ & $1$ & $1000$ & $\mathrm{AGL}(1,5)$ & $1$ & $16$ & $16$ & $4$ & $1$ & $3840$ & $\mathrm{ASL}(2,4)$ & $2$  \\
$7$ & $4$ & $2$ & $1$ & $336$ & $\mathrm{PSL}(3,2)$ & $1$ & $16$ & $16$ & $4$ & $1$ & $20643840$ & $\mathrm{AGL}(4,2)$ & $1$  \\
$7$ & $4$ & $6$ & $1$ & $1008$ & $\mathrm{PSL}(3,2)$ & $1$ & $16$ & $16$ & $6$ & $2$ & $5760$ & $\mathrm{ASL}(2,4)$ & $2$  \\
$9$ & $9$ & $3$ & $1$ & $11664$ & $\mathrm{AGL}(2,3)$ & $1$ & $16$ & $16$ & $6$ & $1$ & $30965760$ & $\mathrm{AGL}(4,2)$ & $1$  \\
$9$ & $9$ & $6$ & $1$ & $23328$ & $\mathrm{AGL}(2,3)$ & $1$ & $16$ & $16$ & $10$ & $1$ & $51609600$ & $\mathrm{AGL}(4,2)$ & $1$  \\
$10$ & $9$ & $2$ & $1$ & $2880$ & $\mathrm{P\Gamma L}(2,9)$ & $1$ & $17$ & $16$ & $3$ & $1$ & $24480$ & $\mathrm{PSL}(2,16){:}C_2$ & $1$  \\
$10$ & $9$ & $4$ & $1$ & $5760$ & $\mathrm{P\Gamma L}(2,9)$ & $1$ & $17$ & $16$ & $6$ & $1$ & $48960$ & $\mathrm{PSL}(2,16){:}C_2$ & $1$  \\
$10$ & $9$ & $4$ & $1$ & $2880$ & $\mathrm{PGL}(2,9)$ & $1$ & $25$ & $25$ & $5$ & $1$ & $1500000$ & $\mathrm{AGL}(2,5)$ & $1$  \\
$10$ & $9$ & $6$ & $1$ & $8640$ & $\mathrm{P\Gamma L}(2,9)$ & $1$ & $25$ & $25$ & $10$ & $1$ & $3000000$ & $\mathrm{AGL}(2,5)$ & $1$  \\
$10$ & $10$ & $4$ & $1$ & $2880$ & $S_{6}$ & $2$ & $36$ & $36$ & $2$ & $1$ & $2903040$ & $\mathrm{O}(7,2)$ & $2$  \\
$13$ & $9$ & $2$ & $1$ & $11232$ & $\mathrm{PSL}(3,3)$ & $1$ & $40$ & $27$ & $2$ & $1$ & $24261120$ & $\mathrm{PSL}(4,3){:}C_{2}$ & $1$  \\
$13$ & $9$ & $4$ & $1$ & $22464$ & $\mathrm{PSL}(3,3)$ & $1$ & $64$ & $64$ & $2$ & $1$ & $1.65e+14$ & $\mathrm{AGL}(6,2)$ & $1$  \\
$13$ & $9$ & $6$ & $1$ & $33696$ & $\mathrm{PSL}(3,3)$ & $1$ & $64$ & $64$ & $4$ & $1$ & $3.30e+14$ & $\mathrm{AGL}(6,2)$ & $1$  \\
$15$ & $7$ & $3$ & $2$ & $7560$ & $A_{7}$ & $1$ & $64$ & $64$ & $6$ & $1$ & $4.95e+14$ & $\mathrm{AGL}(6,2)$ & $1$  \\
$16$ & $6$ & $4$ & $1$ & $23040$ & $C_2^4{:}A_6$ & $1$  \\
\bottomrule
\end{tabular}
\captionof{table}{CGW matrices with $2$-transitive automorphism group}\label{tab:aut_rank2}
\end{center}
\vspace{2ex}

\begin{center}\scriptsize\setlength{\tabcolsep}{2.5pt}\renewcommand{\arraystretch}{0.97}
\begin{tabular}{rrrrrlr@{\hspace{3.5em}}rrrrrlr}
\toprule
$n$ & $w$ & $k$ & cl & $|\mathrm{Aut}|$ & $G$ & f & $n$ & $w$ & $k$ & cl & $|\mathrm{Aut}|$ & $G$ & f \\
\midrule
$9$ & $8$ & $12$ & $1$ & $432$ & $(C_3^2){:}C_4$ & $2$ & $35$ & $16$ & $4$ & $1$ & $80640$ & $A_{8}$ & $1$  \\
$10$ & $7$ & $6$ & $2$ & $720$ & $S_{5}$ & $2$ & $35$ & $16$ & $6$ & $1$ & $120960$ & $A_{8}$ & $1$  \\
$15$ & $9$ & $2$ & $1$ & $1440$ & $S_{6}$ & $2$ & $35$ & $16$ & $10$ & $1$ & $201600$ & $A_{8}$ & $1$  \\
$16$ & $5$ & $2$ & $1$ & $3840$ & $C_2^4{:}S_5$ & $1$ & $36$ & $25$ & $2$ & $1$ & $57600$ & $S_5 \wr S_2$ & $1$  \\
$16$ & $5$ & $4$ & $1$ & $7680$ & $C_2^4{:}S_5$ & $1$ & $36$ & $25$ & $4$ & $1$ & $115200$ & $S_5 \wr S_2$ & $1$  \\
$16$ & $5$ & $6$ & $1$ & $11520$ & $C_2^4{:}S_5$ & $1$ & $36$ & $25$ & $6$ & $1$ & $172800$ & $S_5 \wr S_2$ & $1$  \\
$16$ & $5$ & $10$ & $1$ & $19200$ & $C_2^4{:}S_5$ & $1$ & $36$ & $25$ & $10$ & $1$ & $288000$ & $S_5 \wr S_2$ & $1$  \\
$16$ & $5$ & $20$ & $1$ & $38400$ & $C_2^4{:}S_5$ & $1$ & $36$ & $26$ & $4$ & $1$ & $57600$ & $A_5^2{:}C_2^2$ & $2$  \\
$16$ & $5$ & $30$ & $1$ & $57600$ & $C_2^4{:}S_5$ & $1$ & $36$ & $36$ & $3$ & $1$ & $777600$ & $A_6 \wr S_2$ & $1$  \\
$16$ & $9$ & $2$ & $1$ & $2304$ & $S_4 \wr S_2$ & $1$ & $36$ & $36$ & $4$ & $1$ & $28800$ & $A_5 \wr S_2$ & $3$  \\
$16$ & $9$ & $4$ & $1$ & $4608$ & $S_4 \wr S_2$ & $1$ & $36$ & $36$ & $6$ & $1$ & $1555200$ & $A_6 \wr S_2$ & $1$  \\
$16$ & $9$ & $6$ & $1$ & $6912$ & $S_4 \wr S_2$ & $1$ & $40$ & $28$ & $2$ & $1$ & $51840$ & $\mathrm{O}(5,3)$ & $2$  \\
$16$ & $9$ & $6$ & $1$ & $6912$ & $C_2^4{\cdot}S_3^2$ & $1$ & $56$ & $10$ & $4$ & $1$ & $161280$ & $\mathrm{PSL}(3,4){:}C_{2}$ & $1$  \\
$16$ & $9$ & $10$ & $1$ & $11520$ & $S_4 \wr S_2$ & $1$ & $56$ & $45$ & $2$ & $1$ & $80640$ & $\mathrm{PSL}(3,4){:}C_{2}$ & $1$  \\
$16$ & $9$ & $12$ & $1$ & $13824$ & $S_4 \wr S_2$ & $1$ & $56$ & $46$ & $4$ & $1$ & $80640$ & $\mathrm{PSL}(3,4)$ & $2$  \\
$16$ & $9$ & $12$ & $1$ & $13824$ & $C_2^4{\cdot}S_3^2$ & $1$ & $64$ & $8$ & $2$ & $1$ & $2580480$ & $C_2^6{:}A_8$ & $1$  \\
$16$ & $9$ & $30$ & $1$ & $34560$ & $S_4 \wr S_2$ & $1$ & $64$ & $14$ & $4$ & $1$ & $451584$ & $(\mathrm{PSL}(3,2)^2){:}C_4$ & $1$  \\
$16$ & $10$ & $4$ & $1$ & $2304$ & $C_2^4{\cdot}S_3^2$ & $2$ & $64$ & $49$ & $2$ & $1$ & $451584$ & $(\mathrm{PSL}(3,2)^2){:}D_4$ & $1$  \\
$16$ & $10$ & $12$ & $1$ & $6912$ & $C_2^4{\cdot}S_3^2$ & $2$ & $64$ & $50$ & $4$ & $1$ & $451584$ & $(\mathrm{PSL}(3,2)^2){:}C_2^2$ & $2$  \\
$35$ & $16$ & $2$ & $1$ & $40320$ & $A_{8}$ & $1$  \\
\bottomrule
\end{tabular}
\captionof{table}{CGW matrices with rank-$3$ automorphism group}\label{tab:aut_rank3}
\end{center}
\vspace{2ex}

\begin{center}\scriptsize\setlength{\tabcolsep}{2.5pt}\renewcommand{\arraystretch}{0.97}
\begin{tabular}{rrrrrlr@{\hspace{3.5em}}rrrrrlr}
\toprule
$n$ & $w$ & $k$ & cl & $|\mathrm{Aut}|$ & $G$ & f & $n$ & $w$ & $k$ & cl & $|\mathrm{Aut}|$ & $G$ & f \\
\midrule
$13$ & $9$ & $2$ & $1$ & $78$ & $C_{13}{:}C_3$ & $3$ & $60$ & $25$ & $2$ & $1$ & $28800$ & $A_5^2{:}C_2^2$ & $2$  \\
$13$ & $9$ & $4$ & $1$ & $156$ & $C_{13}{:}C_3$ & $3$ & $60$ & $36$ & $2$ & $1$ & $28800$ & $A_5^2{:}C_2^2$ & $3$  \\
$13$ & $9$ & $6$ & $1$ & $234$ & $C_{13}{:}C_3$ & $3$ & $64$ & $7$ & $2$ & $1$ & $645120$ & $C_2^6{:}S_7$ & $1$  \\
$16$ & $11$ & $2$ & $1$ & $160$ & $C_2^4{:}C_5$ & $3$ & $64$ & $7$ & $6$ & $1$ & $1935360$ & $C_2^6{:}S_7$ & $1$  \\
$16$ & $11$ & $10$ & $1$ & $800$ & $C_2^4{:}C_5$ & $3$ & $64$ & $7$ & $10$ & $1$ & $3225600$ & $C_2^6{:}S_7$ & $1$  \\
$21$ & $9$ & $2$ & $1$ & $672$ & $\mathrm{PGL}(2,7)$ & $2$ & $64$ & $27$ & $2$ & $1$ & $165888$ & $S_4 \wr S_3$ & $1$  \\
$21$ & $13$ & $6$ & $2$ & $2016$ & $\mathrm{PGL}(2,7)$ & $3$ & $64$ & $27$ & $6$ & $1$ & $497664$ & $S_4 \wr S_3$ & $1$  \\
$21$ & $21$ & $3$ & $2$ & $1008$ & $\mathrm{PGL}(2,7)$ & $4$ & $64$ & $27$ & $10$ & $1$ & $829440$ & $S_4 \wr S_3$ & $1$  \\
$21$ & $21$ & $6$ & $2$ & $2016$ & $\mathrm{PGL}(2,7)$ & $4$ & $64$ & $27$ & $30$ & $1$ & $2488320$ & $S_4 \wr S_3$ & $1$  \\
$35$ & $13$ & $6$ & $2$ & $30240$ & $S_{7}$ & $2$ & $64$ & $28$ & $2$ & $1$ & $82944$ & $S_4^3{:}C_3$ & $2$  \\
$36$ & $20$ & $6$ & $1$ & $4320$ & $\mathrm{PGL}(2,9)$ & $2$ & $64$ & $28$ & $6$ & $1$ & $248832$ & $S_4^3{:}C_3$ & $2$  \\
$36$ & $21$ & $6$ & $2$ & $8640$ & $\mathrm{P\Gamma L}(2,9)$ & $2$ & $64$ & $36$ & $6$ & $1$ & $82944$ & $((((C_2^6{:}C_3^2){:}C_2){:}C_3){:}C_2){:}C_2$ & $2$  \\
$36$ & $21$ & $12$ & $2$ & $17280$ & $\mathrm{P\Gamma L}(2,9)$ & $2$ & $64$ & $36$ & $6$ & $1$ & $82944$ & $(((C_2^6{:}(C_3^2{:}C_3)){:}C_2){:}C_2){:}C_2$ & $2$  \\
$36$ & $21$ & $12$ & $1$ & $17280$ & $\mathrm{P\Gamma L}(2,9)$ & $2$ & $64$ & $56$ & $2$ & $1$ & $7225344$ & $(\mathrm{PSL}(3,2){:}C_2){\times}(C_2^3{:}\mathrm{PSL}(3,2))$ & $1$  \\
$36$ & $30$ & $6$ & $1$ & $4320$ & $\mathrm{PGL}(2,9)$ & $3$ & $64$ & $64$ & $6$ & $1$ & $82944$ & $((((C_2^6{:}C_3^2){:}C_2){:}C_3){:}C_2){:}C_2$ & $4$  \\
$49$ & $49$ & $6$ & $1$ & $10584$ & $(C_7^2){:}(C_6{\times}S_3)$ & $4$ & $65$ & $25$ & $2$ & $1$ & $31200$ & $\mathrm{P\Sigma L}(2,25)$ & $2$  \\
\bottomrule
\end{tabular}
\captionof{table}{CGW matrices with automorphism group of rank $\geq 4$}\label{tab:aut_rank4}
\end{center}

\vspace{2ex}

\begin{center}\scriptsize\setlength{\tabcolsep}{2.5pt}\renewcommand{\arraystretch}{0.97}
\begin{tabular}{rrrrrlr}
\toprule
$n$ & $w$ & $k$ & cl & $|\mathrm{Aut}|$ & $G$ & f \\
\midrule
$100$ & $36$ & $2$ & $1$ & $2419200$ & $J_2{:}C_2$ & $1$  \\
$100$ & $37$ & $2$ & $1$ & $1209600$ & $J_2$ & $2$  \\
$117$ & $81$ & $2$ & $1$ & $24261120$ & $\mathrm{PSL}(4,3){:}C_{2}$ & $2$  \\
$120$ & $21$ & $6$ & $2$ & $241920$ & $\mathrm{PSL}(3,4){:}C_{2}$ & $1$  \\
$120$ & $22$ & $12$ & $2$ & $241920$ & $\mathrm{PSL}(3,4)$ & $2$  \\
$120$ & $57$ & $6$ & $4$ & $483840$ & $\mathrm{PSL}(3,4){:}C_2^2$ & $2$  \\
$120$ & $57$ & $12$ & $2$ & $483840$ & $\mathrm{PSL}(3,4){:}C_{2}$ & $2$  \\
$120$ & $64$ & $6$ & $1$ & $120960$ & $\mathrm{PSL}(3,4)$ & $3$  \\
$130$ & $81$ & $2$ & $1$ & $48522240$ & $\mathrm{PSL}(4,3){:}C_2^2$ & $1$  \\
$130$ & $82$ & $4$ & $1$ & $48522240$ & $\mathrm{PSL}(4,3){:}C_{2}$ & $2$  \\
$144$ & $144$ & $2$ & $1$ & $380160$ & $M_{12}{:}C_{2}$ & $4$  \\
$156$ & $125$ & $2$ & $1$ & $5.80\mathrm{e}{+}10$ & $\mathrm{PSL}(4,5){:}C_{4}$ & $1$  \\
$156$ & $126$ & $4$ & $1$ & $18720000$ & $\mathrm{O}(5,5)$ & $2$  \\
$175$ & $49$ & $2$ & $1$ & $235200$ & $\mathrm{PSL}(2,49){:}C_{2}$ & $2$  \\
$220$ & $72$ & $2$ & $1$ & $190080$ & $M_{12}$ & $1$  \\
$220$ & $73$ & $2$ & $2$ & $190080$ & $M_{12}$ & $2$  \\
$495$ & $273$ & $6$ & $2$ & $1140480$ & $M_{12}{:}C_{2}$ & $4$  \\
$806$ & $625$ & $2$ & $1$ & $1.16\mathrm{e}{+}11$ & $\mathrm{PSL}(4,5){:}D_{4}$ & $1$  \\
$806$ & $626$ & $4$ & $1$ & $1.16\mathrm{e}{+}11$ & $\mathrm{PSL}(4,5){:}C_2^2$ & $2$  \\
$2058$ & $425$ & $2$ & $1$ & $-$ & $\mathrm{He}$ & $-$  \\
$2058$ & $426$ & $4$ & $1$ & $-$ & $\mathrm{He}$ & $-$  \\
\bottomrule
\end{tabular}
\captionof{table}{Matrices of degree $n > 80$ from the partial search; Paley pairs $\We(n,n-1)$, $\mathrm{BH}(n;2)$ from $\mathrm{PSL}(2,q)$ also occur at each $n = q+1 \leq 258$.}\label{tab:aut_gt80}
\end{center}

\vspace{2ex}

\end{document}